\documentclass[reqno,a4paper,12pt]{amsart} 

\usepackage{amsmath,amscd,amsfonts,amssymb}
\usepackage{mathrsfs,dsfont}

\usepackage{tikz}
\usetikzlibrary{patterns}
\usepackage{float}
\usepackage{appendix}
\usepackage{caption}
\usepackage{subcaption}
\usepackage{enumerate}

\addtolength{\intextsep}{0.5cm}
\addtolength{\textfloatsep}{0.5cm}

\numberwithin{equation}{section}
\numberwithin{figure}{section}

\addtolength{\topmargin}{-0.9cm}
\addtolength{\textheight}{1.8cm}
\addtolength{\hoffset}{-1.5cm}
\addtolength{\textwidth}{3cm}

\parskip .06in

\def\R{\mathbb{R}}
\def\C{\mathbb{C}}
\def\Q{\mathbb{Q}}
\def\Z{\mathbb{Z}}

\def\1{\mathds{1}}

\renewcommand\leq{\leqslant}
\renewcommand\geq{\geqslant}

\renewcommand\hat{\widehat}

\newcommand{\ft}[1]{\widehat #1}
\newcommand{\dotprod}[2]{\langle #1 , #2 \rangle}

\theoremstyle{plain}
\newtheorem{thm}{Theorem}[section]
\newtheorem{lem}[thm]{Lemma}
\newtheorem{corollary}[thm]{Corollary}

\newtheorem{problem}[thm]{Problem}

\newtheorem{claim}{Claim}
\newtheorem*{claim*}{Claim}
\newtheorem*{thm*}{Theorem}

\theoremstyle{definition}

\newtheorem*{definition*}{Definition}
\newtheorem*{remarks*}{Remarks}
\newtheorem*{remark*}{Remark}

\newenvironment{enumerate-math}
{\begin{enumerate}
\addtolength{\itemsep}{5pt}
}
{\end{enumerate}}

\newenvironment{enumerate-text}
{\begin{enumerate}
\addtolength{\itemsep}{5pt}
}
{\end{enumerate}}

\begin{document}

\title{Periodic structure of translational multi-tilings in the plane}

\author{Bochen Liu}
\address{National Center for Theoretical Sciences, No. 1 Sec. 4 Roosevelt Rd., National Taiwan University, Taipei, 106, Taiwan}
\email{Bochen.Liu1989@gmail.com}

\thanks{Research supported by ERC Starting Grant No.\ 713927.}
\subjclass[2010]{52C20, 05B45}
\date{}

\keywords{multiple tiling, periodic tiling}

\begin{abstract}
	Suppose $f\in L^1(\mathbb{R}^d)$, $\Lambda\subset\mathbb{R}^d$ is a finite union of translated lattices such that $f+\Lambda$ tiles with a weight. We prove that there exists a lattice $L\subset{\mathbb{R}}^d$ such that $f+L$ also tiles, with a possibly different weight. As a corollary, together with a result of Kolountzakis, it implies that any convex polygon that multi-tiles the plane by translations admits a lattice multi-tiling, of a possibly different multiplicity.

	Our second result is a new characterization of convex polygons that multi-tile the plane by translations. It also provides a very efficient criteria to determine whether a convex polygon admits translational multi-tilings. As an application, one can easily construct symmetric $(2m)$-gons, for any $m\geq 4$, that do not multi-tile by translations.

	Finally, we prove a convex polygon which is not a parallelogram only admits periodic multiple tilings, if any.
\end{abstract}
\maketitle

\section{Introduction}
\subsection{Tiling and multiple tiling}
Let $P\subset{\mathbb{R}}^d$ be a convex body and $\Lambda\subset{\mathbb{R}}^d$ be a discrete multi-set, which means $\Lambda$ is discrete and each point has finite multiplicity in $\Z_+$. Denote $\chi_P$ as the indicator function of $P$ and 
$$\delta_\Lambda = \sum_{\lambda\in\Lambda} \delta_\lambda,$$
where $\delta_\lambda$ is the Dirac measure at $\lambda$. We say that $P+\Lambda$ tiles if for almost all $x\in\R^d$,
\begin{equation}
	\label{tiling}
	\chi_P*\delta_\Lambda(x)=\sum_{\lambda\in\Lambda} \chi_P(x-\lambda)=\sum_{\lambda\in\Lambda} \chi_{P+\lambda}(x) = 1.
\end{equation}
We say $P+\Lambda$ multi-tiles, or is a multiple tiling, of multiplicity $k\in\Z_+$, if for almost all $x\in\R^d$,
\begin{equation}
	\label{multi-tiling}
	\chi_P*\delta_\Lambda(x) = k.
\end{equation}
More generally we say $f+\Lambda$ tiles with a weight $w\in\R$, where $f\in L^1(\R^d)$, if for almost all $x\in\R^d$,
$$f*\delta_\Lambda(x) = w.$$
One can see under these definitions $P+\Lambda$ is equivalent to $\chi_P+\Lambda$.

Throughout this paper a lattice of $\R^d$ is a discrete subgroup of the additive group $\R^d$ which is isomorphic to the additive group $\Z^d$.

The study of translational tilings by convex bodies has a long history. It has been known for a long time that the only convex domains that tile the plane by translations are parallelograms and hexagons. In 1885, Fedorov classified three-dimensional convex polytopes which can tile by translations into 5 different combinatorial types. In 1897, Minkowski (\cite{Min97}) showed that if a convex body $P$ tiles $R^d$ by lattice translations, then $P$ must be a centrally symmetric polytope, with centrally symmetric facets. Finally Venkov (\cite{Ven54}) gave a characterization, which was later rediscovered by McMullen (\cite{McM80}), of convex bodies that tile $\R^d$ by translations.
\begin{thm}
	[Venkov, 1954 \& McMullen, 1980]
	Let $P$ be a convex body in $\R^d$. Then $P$ tiles by translations if and only if the following four conditions are satisfied:
\begin{enumerate}
	\item $P$ is a polytope. 
	\item $P$ is centrally symmetric.
	\item All facets of $P$ are centrally symmetric.
	\item Each ``belt" of $P$ consists of $4$ or $6$ facets.
\end{enumerate}
Here by a facet one means a $(d-1)$-dimensional face, and by a belt one means the collection of its facets which contain a translate of a given subfacet, that is, a $(d-2)$-dimensional face, of $P$.
\end{thm}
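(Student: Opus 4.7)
The plan is to prove both directions of the characterization, treating (1)--(4) as necessary constraints forced by the tiling equation $\chi_P*\delta_\Lambda = 1$ and then showing that, jointly, they are strong enough to construct an explicit lattice tiling.

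First, for the \emph{necessity} of conditions (1)--(3), I would argue that a translational tile must have polyhedral boundary: any curved patch of $\partial P$ would have to be matched by a curved patch of a neighboring translate, and iterating this matching around $\partial P$ would rigidify the boundary to be piecewise flat. Central symmetry then follows by a Minkowski-type local argument: for each facet $F$ of $P$, the tiling identity forces the other side of $F$ to be covered by translates whose own facets along $F$ paste together into a translate of $-F$; aggregating this facet-matching over all facets and using the convexity of $P$ yields $P=-P+v_0$ for some $v_0\in\R^d$. The same local analysis, applied one dimension lower to a facet viewed as a tile within its affine span, forces each facet to be centrally symmetric.

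For the \emph{necessity} of (4), I would localize to a subfacet $G$. Fix the affine $2$-plane $\Pi$ orthogonal to $\mathrm{aff}(G)$ through a relative interior point of $G$. In a thin slab around $\Pi$, the tiling $P+\Lambda$ restricts (after projection) to a $2$-dimensional translational tiling by centrally symmetric polygons, each polygon being the cross-section of a tile that contains a translate of $G$ on its boundary. The facets of $P$ in the belt of $G$ correspond bijectively to the edges of the projected tile at the common vertex. Since in the plane the only centrally symmetric convex bodies which translationally tile are parallelograms and centrally symmetric hexagons, the number of edges incident at that vertex must be $4$ or $6$, which is exactly the belt condition.

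For \emph{sufficiency}, assume (1)--(4). Using (2) and (3), pair each facet $F_i$ with its opposite $F_i^*=-F_i+\ell_i$, and let $\ell_i\in\R^d$ be the corresponding translation, so that $P$ and $P+\ell_i$ meet exactly along $F_i$. The belt condition (4) implies that around each subfacet the adjacent translations $\ell_i$ satisfy a closed additive relation (four of them summing to zero in a $4$-belt, three in a $6$-belt), which one reads off from the planar cross-section $\Pi$ used above. Let $L$ be the subgroup of $\R^d$ generated by all the $\ell_i$; a face-to-face propagation argument shows $P+L$ covers $\R^d$, and the belt relations force local consistency, which excludes overlap. The main obstacle of the whole proof concentrates here: one must show that every closed walk of tile-to-tile facet crossings returns to the identity translation in $L$, a discrete monodromy statement in which the belts play the role of fundamental relations. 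Once this is established, discreteness of $L$ follows because $P$ has finite positive volume and $P+L$ is a packing, completing the lattice tiling.
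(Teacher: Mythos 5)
The paper does not actually prove this statement: it is quoted as a classical theorem of Venkov and McMullen and used as background, so there is no internal proof to compare against. Judged on its own terms, your proposal is an outline of the standard strategy rather than a proof, and two steps are genuinely incomplete. In the necessity of condition (4), the reduction to the plane does not work as described: slicing by a thin slab around the $2$-plane $\Pi$ orthogonal to a subfacet $G$ and projecting does not produce a translational tiling of $\Pi$ by a single centrally symmetric polygon. The images of distinct tiles overlap, and the tiles whose cross-sections appear near $\Pi\cap G$ include many that do not carry a translate of $G$ on their boundary; the facets of the belt of $G$ need not all be visible in any single such local configuration. What one actually obtains is a local vertex figure around a $(d-2)$-face of the tiling complex, and extracting the belt count from that requires a separate angle-sum and combinatorial argument (this is the content of McMullen's proof) which you have not supplied; in particular you cannot simply invoke the classification of planar translational tiles, since no planar tiling by a single polygon has been produced.

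More seriously, in the sufficiency direction you explicitly identify the crux --- that every closed walk of facet-to-facet crossings returns to the identity translation, so that $P+L$ is a packing --- and then do not prove it. This ``discrete monodromy'' statement is essentially the entire content of the hard direction of the theorem, and asserting that ``a face-to-face propagation argument shows $P+L$ covers $\R^d$'' while ``the belt relations force local consistency'' is naming the theorem, not proving it. Your derivation of the discreteness of $L$ is also circular as written, since it presupposes that $P+L$ is a packing, which is exactly the unproven step. A minor further gap: the claim that a convex translational tile must be a polytope is asserted via a vague ``curved patch'' rigidification; the paper's appendix shows that even in the tiling case this requires a careful cell-decomposition argument (each cell of the arrangement $\partial P+\Lambda$ is an intersection of translates of $P$, hence convex, and is then shown to be a finite intersection of half-spaces). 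As it stands the proposal is a reasonable roadmap to the literature, not a self-contained proof.
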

As a consequence of Venkov-McMullen theorem, it follows that if a convex polytope $P$ tiles, it admits a face-to-face tiling by translates along a certain lattice.
\vskip.125in
The study of multiple translational tilings dates back to 1936, when the famous Minkowski conjecture for tilings was extended to multiple tilings by Furtw\"angler (\cite{Fur36}). For more information about this problem, one can see, for example, \cite{Zon06}, Chapter $6, 7, 8$. It was showed by Bolle (\cite{Bol94}) that in the plane, every convex domain that admits lattice multi-tilings has to be a centrally symmetric polygon. More generally, it is well known 
that a convex body in $\R^d$ that multi-tiles by translations must be a convex polytope (see Appendix). In \cite{GRS12}, Gravin, Robins and Shiryaev showed that these polytopes must be centrally symmetric with centrally symmetric facets. This implies in dimension $2,3$, a convex body $P$ multi-tiles only if it is a zonotope. Therefore in the rest of this paper we assume $P\subset\R^2$ is the zonotope generated by pairwise non-colinear vectors $e_1,\dots, e_m$, of increasing arguments (see Figure \ref{fig:polygon}), that is,
$$P=\left\{\alpha_1 e_1+\cdots+\alpha_m e_m: \alpha_j\in [-\frac{1}{2}, \frac{1}{2}]\right\}. $$
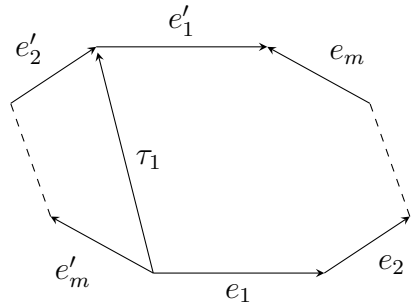
\begin{figure}[H]
\centering
\begin{tikzpicture}[scale=0.75, p2/.style={line width=0.275mm, black}, p3/.style={line width=0.15mm, black!50!white}]

\draw[-stealth] (-1, -2) -- (2, -2);
\draw[-stealth] (2,-2) -- (3.5,-1); 
\draw[-stealth] (2.8, 1) -- (1, 2);
\draw[dashed] (3.5,-1) -- (2.8,1);
\draw[-stealth] (-2, 2) -- (1,2);
\draw[-stealth] (-3.5, 1) -- (-2,2); 
\draw[-stealth]  (-1, -2) -- (-2.8, -1);
\draw[dashed] (-2.8, -1) -- (-3.5,1);

\draw[-stealth] (-1,-2) -- (-1.975,1.9);

\draw (1.9, 1.5) node[anchor=south west]{$e_m$};
\draw (-1.9, -1.5) node[anchor=north east]{$e'_m$};
\draw (2.75, -1.5) node[anchor=north west]{$e_2$};
\draw (-2.75, 1.5) node[anchor=south east]{$e'_2$};
\draw (0.5, -2) node[anchor=north]{$e_1$};
\draw (-0.5, 2) node[anchor=south]{$e'_1$};
\draw (-1.5, 0) node[anchor=west]{$\tau_1$};


\end{tikzpicture}
\caption{The zonotope generated by $e_1, \dots, e_m$, of increasing arguments}
\label{fig:polygon}
\end{figure}

Now edges of $P$ consist of translated $e_1,\dots, e_m$ and their parallel edges $e'_1,\dots, e'_m$. We denote $\tau_j$ as the vector that translates $e_j$ to its parallel edge. 

Besides the structure of $P$, one can also study the structure of the discrete multi-set $\Lambda$ such that $P+\Lambda$ multi-tiles. In 2000, Kolountzakis (\cite{Kol00}) proved the following result in the plane. 
\begin{thm}[Kolountzakis, 2000]\label{Kol00}
Suppose $P\subset\R^2$ is a convex polygon which is not a parallelogram, $\Lambda$ is a multi-set such that $P+\Lambda$ multi-tiles. Then $\Lambda$ must be a finite union of translated lattices.	
\end{thm}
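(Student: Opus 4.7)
The natural route is Fourier-analytic. Because $\chi_P*\delta_\Lam=k$ is bounded, $\delta_\Lam$ is a translation-bounded measure on $\R^2$ (every unit ball contains $O(k)$ points of $\Lam$) and hence a tempered distribution. Taking the distributional Fourier transform of the multi-tiling identity yields
\begin{equation*}
	\ft{\chi_P}(\xi)\cdot\ft{\delta_\Lam}(\xi)=k\,\delta_0,
\end{equation*}
so $\spec(\delta_\Lam):=\supp(\ft{\delta_\Lam})$ satisfies $\spec(\delta_\Lam)\setminus\{0\}\subseteq Z_P$, where $Z_P:=\{\xi\in\R^2:\ft{\chi_P}(\xi)=0\}$. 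The problem thereby reduces to: which translation-bounded discrete measures on $\R^2$ have Fourier spectrum inside $Z_P\cup\{0\}$?

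The second step is to understand the geometry of $Z_P$ for a convex non-parallelogram polygon. By the divergence theorem, $\ft{\chi_P}$ is a rational combination of vertex exponentials $e^{-2\pi iv\cdot\xi}$ with denominator $\prod_{j}(\xi\cdot e_j)$ over the edge vectors $e_j$. The natural ``bad lines'' are the lines $\ell_j:=\{\xi:\xi\cdot e_j=0\}$ through the origin, perpendicular to the edges. Along each $\ell_j$ a Cavalieri slicing expresses $\ft{\chi_P}|_{\ell_j}$ as the 1D Fourier transform of the cross-section length function of $P$ in direction $e_j^\perp$; for a non-parallelogram this cross-section function is a non-constant piecewise-linear function, whose 1D Fourier zeros form a structured discrete set along $\ell_j$. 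Off the lines $\ell_j$, only an essentially isolated set of further zeros arises from the exponential polynomial in the numerator. The non-parallelogram hypothesis (at least three pairs of parallel edges) is what ensures that the bad lines all meet at the origin, rather than forming a non-origin-centred sinc-type grid as in the parallelogram case.

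The third, and key, step is a structural theorem: a translation-bounded discrete positive measure $\mu$ on $\R^2$ whose Fourier spectrum lies in $Z_P\cup\{0\}$ as described must be a finite union of translated lattices. The idea is to decompose $\ft{\mu}$ into pieces supported on each $\ell_j$ plus an ``isolated point'' piece. A line-piece, being the Fourier transform of a translation-bounded distribution on $\ell_j\cong\R$, corresponds on the spatial side to a measure on $\R^2$ periodic in the direction $e_j$ and pulled back from the line; combined with the arithmetic structure of the zeros along $\ell_j$ and the discreteness and non-negativity of $\mu$, the line-contribution should be pinned down as a translated $2$-dimensional lattice. Summing the pieces yields the claimed decomposition of $\Lam$.

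The hardest part is this third structural step. Restricting a general tempered distribution to an individual line $\ell_j$ is not well-defined, so one must instead regularise $\ft{\mu}$ (say by convolving with a suitable approximate identity), perform a polyhedral-type decomposition along the bad lines, and pass to the limit while keeping positivity and translation-boundedness under control. This is in the spirit of Cordoba's, Meyer's, and Lev--Olevskii's work on crystalline measures. A secondary technical point is to rule out nontrivial contributions of $\spec(\delta_\Lam)$ at the isolated zeros of $\ft{\chi_P}$ off the lines $\ell_j$: this should follow from the uniform translation-boundedness of $\ft{\delta_\Lam}$ as a measure together with a pigeonhole on how densely isolated spectral atoms can appear without violating discreteness of $\Lam$.
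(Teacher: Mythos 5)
This statement is quoted in the paper from Kolountzakis's 2000 article and is not proved there, so there is no internal proof to compare against; I can only assess your sketch on its own merits. Your first step is sound: the support relation $\supp\widehat{\delta_\Lambda}\subseteq\{0\}\cup Z_P$ is standard and is indeed where the actual proof begins. The trouble starts with your description of $Z_P$. The lines $\ell_j=\{\xi:\xi\cdot e_j=0\}$ are not contained in $Z_P$ at all: as you yourself observe, $\hat{\chi_P}$ restricted to $\ell_j$ is the one-dimensional Fourier transform of a cross-section length function, which vanishes only on a \emph{discrete} subset of $\ell_j$. And away from these lines the zeros are not ``essentially isolated'': for symmetric $P$ the function $\hat{\chi_P}$ is real-valued, real-analytic, positive at the origin and negative somewhere (e.g.\ the marginal perpendicular to an edge of a hexagon is a trapezoid, whose transform is a product of two sinc-type factors with different periods and takes negative values), so $Z_P$ contains genuine curves and is not a finite union of lines plus a discrete set. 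Consequently the proposed decomposition of $\widehat{\delta_\Lambda}$ into ``line pieces plus isolated points'' has no foundation. The structured zeros one can actually exploit come from writing $\hat{\chi_P}$ via the divergence theorem as a sum over pairs of parallel edges, each summand carrying factors $(1-e^{-2\pi i\,\xi\cdot\tau_j})$ and $\sin(\pi\,\xi\cdot e_j)$ that vanish on families of parallel lines \emph{not} through the origin; your parallelogram dichotomy is likewise backwards, since for a square the zero set genuinely is such a grid of lines, which is exactly what permits the column-shifted non-lattice tilings.

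Second, even granting a corrected geometric picture, the two steps that constitute the theorem are missing. (i) Knowing that the \emph{sum} of the edge-pair contributions vanishes on $\supp\widehat{\delta_\Lambda}\setminus\{0\}$ does not entitle you to treat each edge pair separately; isolating the contribution of a single pair, so that the relevant part of the spectrum sits on finitely many translates of a single line, is where the real work lies and is absent from your outline. (ii) The structural claim --- that a positive, translation-bounded discrete measure whose Fourier transform is supported on finitely many translates of finitely many lines must be a finite union of translated lattices --- is essentially the theorem itself; announcing that the line contribution ``should be pinned down'' and appealing to the spirit of crystalline-measure results is not an argument, and the regularisation plan does not engage with the actual difficulty of converting spectral support on translated lines into periodicity in two independent directions and then into full two-dimensional lattices using positivity. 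As it stands the proposal identifies a reasonable framework but proves neither of the lemmas on which the result rests.
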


A three dimensional version of this result was later obtained by Gravin, Kolountzakis, Robins and Shiryaev (\cite{GKRS13}). They proved that the same conclusion on $\Lambda$ holds if a convex polytope $P\subset\R^3$ multi-tiles with $\Lambda$ and $P$ is not a two-flat zonotope. They also constructed multiple tilings by two-flat zonotopes where the discrete sets are not finite unions of translated lattices. Here by a two-flat zonotope one means the Minkowski sum of finitely many line segments which lie in the union of two different two-dimensional subspaces. 

If, in particular, $\Lambda$ is given as a lattice, Bolle (\cite{Bol94}) used combinatorial methods to give a characterization of convex polygons that multi-tiles the plane with $\Lambda$. An equivalent formulation is the following.
\begin{thm}
	[Bolle, 1994]\label{Bol94}
Let  $P$ be a convex polygon  in $\R^2$, and $L$ be a lattice in $\R^2$. 
Then $P+L$ multi-tiles if and only if 
$P$ is centrally symmetric, and for each pair of parallel edges $e$ and $e'$ of $P$ one of the following conditions is satisfied:
\begin{enumerate}
\item
	The translation vector, $\tau$, which carries $e$ onto $e'$, is in $L$, or
	\item
	$e\in L$ and there exists $t\in\R$ such that $te+\tau\in L$. 
\end{enumerate}
\end{thm}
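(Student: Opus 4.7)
My plan is to reduce the multi-tiling condition to a local balance across each edge of $P$, and then to a one-dimensional counting problem on the windows cut out on that edge. I take the central symmetry of $P$ as given, since its necessity for multi-tiling is part of the earlier cited results of Gravin, Robins and Shiryaev.

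I set $u(x) := \sum_{\lambda\in L}\chi_{P+\lambda}(x)$; multi-tiling says $u\equiv k$, so $\nabla u = 0$ distributionally. Its support lies on $\bigcup_\lambda \partial(P+\lambda)$, and at almost every $x$ in the relative interior of an edge $e_j$ of $P$ the component of $\nabla u$ along the outward normal $n_j$ is only influenced by translates of $P$ whose boundary passes through $x$ along an edge parallel to $e_j$, since non-parallel edges cross $e_j$ only on a measure-zero set. Because $P$ is a zonotope, the edges of $P$ parallel to $e_j$ are exactly $e_j$ and $e'_j$, so the contributing translates split into two types: (I) $\lambda = t\,e_j \in L$ with $t \in (-1,1)\setminus\{0\}$, contributing a $-1$ jump on $e_j\cap(e_j+\lambda)$; and (II) $\lambda = -\tau_j + s\,e_j \in L$ with $s \in (-1,1)$, contributing a $+1$ jump on $e_j\cap(e'_j+\lambda)$. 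Together with the $-1$ jump from $e_j$ itself, $\nabla u = 0$ along $e_j$ reads $b(x) - a(x) = 1$ at a.e.\ $x \in e_j$, where $a, b$ count the type I, II shifts whose windows cover $x$.

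Next I parameterize $e_j$ by $t \in [0,1]$ and set $\beta = \alpha_j / |e_j|$, where $\alpha_j > 0$ is the length of the shortest nonzero vector in $L \cap \R e_j$ (or $\beta = +\infty$). Type I shifts form $\beta\Z \cap (-1,1) \setminus \{0\}$, and type II shifts, if nonempty, form $(s_0 + \beta\Z) \cap (-1,1)$ for some $s_0 \in [0,\beta)$; each shift $r$ contributes its jump on a window of length $1 - |r|$ inside $[0,1]$. The balance $b - a \equiv 1$ becomes a jump-matching condition for the step functions $a, b$ on $(0,1)$, and I analyze three cases. If $\beta \ge 1$, then $a \equiv 0$ and at most one type II window fits, forcing $s_0 = 0$, i.e., $\tau_j \in L$ (condition 1). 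If $\beta < 1$ with $1/\beta \notin \Z$ (equivalently $e_j \notin L$), the $\pm 1$ jumps of $a$ at $\{k\beta\} \cap (0,1)$ do not self-cancel, and matching them by type II jumps forces $s_0 + \beta\Z$ to coincide with $\beta\Z$; hence $s_0 = 0$ and condition 1 holds again. If $\beta = 1/n$ for some $n \in \Z_+$ (equivalently $e_j \in L$), the $\pm 1$ type I jumps at $\{k/n\}\cap(0,1)$ pair up and cancel, so $a \equiv n-1$, and any nonempty type II set yields $b \equiv n$; the nonemptiness is exactly $\tau_j \in L + \R e_j$, which combined with $e_j \in L$ is condition 2.

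Sufficiency is the same analysis read backwards: each condition produces the local balance at every edge, so $\nabla u = 0$ as a distribution, $u$ is a.e.\ constant, and $P + L$ multi-tiles. I expect the main obstacle to be the middle case, where one must verify that the only offset $s_0$ compatible with the jump-matching on $(0,1)$ is $s_0 = 0$. This reduces to the fact that the residues $\{k\beta \bmod 1\}$ are not preserved by any nontrivial shift $s_0 \in (0,\beta)$ unless $1/\beta$ is a positive integer, so any mismatch between $s_0+\beta\Z$ and $\beta\Z$ leaves uncancelled jumps in $b-a$.
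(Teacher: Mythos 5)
The paper does not prove this statement --- it is quoted as Bolle's theorem with a citation to Bolle's 1994 paper, and the only related argument given is the quantitative refinement in Lemma~\ref{quantitative-Bol} --- so there is nothing internal to compare your proof against; I can only assess it on its own terms. Your edge-balance argument is sound and is essentially the classical combinatorial proof: the reduction of $\nabla u=0$ to the window identity $b(x)-a(x)=1$ on each edge is legitimate (non-parallel edges of translates meet the line through $e_j$ in a null set, and $L$-periodicity of $u$ lets you check only the two lines through $e_j$ and $e_j'$), and the trichotomy in $\beta$ recovers exactly the dichotomy between conditions 1 and 2. One boundary case is mis-stated: when $\beta=1$, i.e.\ $e_j$ is a primitive vector of $L\cap\R e_j$, your Case 1 asserts that $s_0=0$ is forced, but for any $s_0\in(0,1)$ the two type II windows $[0,s_0]$ and $[s_0,1]$ already give $b\equiv 1$ a.e.; this is precisely condition 2 with $n=1$ (it occurs already for hexagons with $e_j\in L$ and $\tau_j\notin L$), so concluding condition 1 there would be false. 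Since your third case $\beta=1/n$ handles $n=1$ correctly, the repair is only to read Case 1 as $\beta>1$ (including $\beta=+\infty$, where type II has at most one element). In Case 2 you should make explicit that $a$ has up-jumps on $\beta\Z\cap(0,1)$ and down-jumps on $(1+\beta\Z)\cap(0,1)$, and that these two sets are disjoint precisely because $1/\beta\notin\Z$; that disjointness is what lets the unmatched up-jump of $a$ at $\beta$ force $s_0\equiv 0\pmod{\beta\Z}$. Taking central symmetry as given from the cited necessity results is consistent with how the paper itself treats that hypothesis. An alternative route to the whole theorem, the one taken in the general-dimensional version by Lev and the author mentioned after the statement, is Fourier-analytic via Lemma~\ref{vanish}, locating the zeros of $\widehat{\chi_P}$ on $L^*\setminus\{0\}$; your approach is more elementary and stays entirely in physical space.
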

A general dimensional version of Bolle's theorem was recently obtained by Lev and the author (\cite{LL18}), via a Fourier-analytic approach.

There is also some work on planar translational multi-tilings with given multiplicities. See, for example, Section 5, 6, 7 in Zong's survey \cite{Zon18}.

\vskip.125in
\subsection{Main results}
Throughout this paper we say a (multiple) tiling $P+\Lambda$ is periodic if there exists a lattice $L$ such that $\Lambda+L=\Lambda$, which is equivalent to that $\Lambda$ is a finite union of translations of $L$.
\subsubsection{Periodic tiling conjecture}
One famous open problem on tiling is the periodic tiling conjecture (\cite{GS87}, \cite{LW96}), which states any region that tiles $\R^d$ by translations has a periodic tiling. Here a region is a closed subset of $\R^d$ whose boundary has measure $0$. 

In the real line this conjecture was confirmed by Lagarias and Wang (\cite{LW96}) for bounded regions. In the plane it is proved, first by Girault-Beauquier and Nivat (\cite{BN91}) with boundary conditions and finally by Kenyon (\cite{Ken92}), for closed topological discs.

For convex bodies (equivalently, convex polytopes) in general $\R^d$, it follows from Venkov-McMullen's result that if $P$ tiles by translations then it admits a lattice tiling. 

The periodic tiling conjecture in $\Z^2$ has been solved by Bhattacharya (\cite{Bha16}). 

It is natural to extend this problem to multiple tilings. The following question was raised by Gravin, Robins and Shiryaev (see Problem 7.3 in \cite{GRS12}).

\begin{problem}\label{GRS12}
Prove or disprove that if a convex polytope multi-tiles $\R^d$ by translations, then it also multi-tiles $\R^d$ by a lattice, for a possibly different multiplicity.
\end{problem}
Problem \ref{GRS12} is open in all dimensions. It is mentioned in \cite{Cha15} that Shiryaev has a proof in the plane, but it is never posted.  Theorem \ref{Kol00} and the discussion afterwards show that, in low dimensions it is quite often that $\Lambda$ is a finite union of translated lattices. Inspired by this, Chan asked the following weaker version in \cite{Cha15}, where he also considered two special cases. 

\begin{problem}
	\label{Cha15}
	Let $P$ be a convex polytope that multi-tiles $\R^d$ with a discrete multiset $\Lambda$, which is a finite union of translated lattices. Prove or disprove that $P$ could multi-tile $\R^d$ with a lattice.
\end{problem}

In this paper we solve Problem \ref{Cha15} in all dimensions. In fact we prove a stronger statement that works for any $f\in L^1(\R^d)$. Fourier analysis and number theory play important roles in the proof.

\begin{thm}\label{main1}
	Suppose $f\in L^1(\R^d)$ and $\Lambda$ is a discrete multi-set such that 
	$$\delta_\Lambda=\sum_{j=1}^n \delta_{L_j+{z}_j},  $$
where $L_j$ are lattices and ${z}_j\in\R^d$. If $f+\Lambda$ tiles with a weight, then for each $j$ there exists a lattice $\widetilde{L_j}$ containing $L_j$ such that $f+\widetilde{L_j}$ also tiles, with a possibly  different weight.
\end{thm}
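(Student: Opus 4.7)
The plan is to transfer the hypothesis to the Fourier side, where the tiling condition becomes a vanishing condition for $\widehat f$ on a dual lattice, and then to use the algebraic structure of the zero set of an exponential sum to construct, for each $j$, a finite-index sublattice $M\subseteq L_j^*$ on which a certain mass function of $\delta_\Lambda$ is nowhere zero outside the origin. Setting $\widetilde{L_j}:=M^*$ will then produce a lattice containing $L_j$ on whose dual $\widehat f$ automatically vanishes away from $0$, which is equivalent to $f+\widetilde{L_j}$ tiling with a weight.

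First, since $\Lambda$ is discrete, the lattices $L_j$ must be pairwise commensurable, so $L^-:=\bigcap_{j=1}^{n} L_j$ is a common finite-index sublattice and $\delta_\Lambda$ is $L^-$-periodic. Poisson summation then gives
\[
\widehat{\delta_\Lambda}\;=\;\frac{\phi(\xi)}{\vol(L^-)}\,\delta_{(L^-)^*},
\qquad
\phi(\xi)\;:=\;\sum_{j=1}^{n} [L_j:L^-]\,e^{-2\pi i \dotprod{z_j}{\xi}}\,\1_{L_j^*}(\xi).
\]
Fourier-transforming $f*\delta_\Lambda=w$ and using continuity of $\widehat f$ collapses the hypothesis to $\widehat f(\xi)\phi(\xi)=0$ for every $\xi\in(L^-)^*\setminus\{0\}$, so $\widehat f$ already vanishes at every nonzero dual-lattice point where $\phi(\xi)\neq 0$. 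It therefore suffices, for each $j$, to produce a finite-index sublattice $M\subseteq L_j^*$ on which $\phi$ never vanishes except at the origin.

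The heart of the argument, and the principal obstacle, is to control the zero set of $\phi$ on $L_j^*$. For $\xi\in L_j^*$ not lying in any other $L_k^*$ one has $\phi(\xi)=[L_j:L^-]e^{-2\pi i z_j\cdot\xi}\neq 0$, so all zeros live inside $\bigcup_{k\neq j}(L_j^*\cap L_k^*)$. I would stratify $L_j^*$ by the subset $S\ni j$ of indices with $\xi\in\bigcap_{k\in S}L_k^*$; on each stratum $\phi$ agrees with a genuine exponential polynomial $\phi_S$ on the full-rank sublattice $\Gamma_S:=\bigcap_{k\in S}L_k^*$. Here is where number theory enters: by a theorem of Laurent on the intersection of algebraic subvarieties of $(\C^*)^N$ with finitely generated subgroups (or an elementary substitute tailored to these very specific sums), the zeros of each $\phi_S$ on $\Gamma_S$ form a finite union of cosets of subgroups of $\Gamma_S$. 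Assembling the finitely many strata, the zero set of $\phi$ on $L_j^*$ is contained in a finite union $\bigcup_i(a_i+H_i)$ where each $H_i$ is a subgroup of $L_j^*$ and $a_i\notin H_i$, since $\phi(0)=|\Lambda/L^-|>0$ shows $0$ is never a zero.

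The remaining step is combinatorial. Take $M:=N L_j^*$ for a positive integer $N$ to be chosen; then $M\cap(a_i+H_i)=\emptyset$ is equivalent to the image of $a_i$ in the quotient $A_i:=L_j^*/H_i$ not lying in $NA_i$. Writing $A_i\cong\Z^{r_i}\oplus T_i$ with $T_i$ finite, a short case analysis---on whether the image of $a_i$ has a nonzero free part or is purely torsion---shows this is achieved by any $N$ simultaneously sufficiently large and divisible by a prescribed finite list of torsion orders; since only finitely many $i$ are involved, a uniform such $N$ exists. Then $\widetilde{L_j}=M^*=\tfrac{1}{N}L_j\supseteq L_j$, and Fourier inversion yields that $f+\widetilde{L_j}$ tiles with weight $\widehat f(0)/\vol(\widetilde{L_j})$.
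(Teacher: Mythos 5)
Your overall strategy coincides with the paper's: pass to the Fourier side, observe that at each nonzero point of $L_j^*$ either $\hat f$ vanishes or a certain exponential sum with constant coefficients vanishes, invoke a unit-equation / Mordell--Lang type theorem (you cite Laurent; the paper uses Evertse--Schlickewei--Schmidt, which yields the same coset structure) to conclude that the zero set of that exponential sum is contained in a finite union of cosets $a_i+H_i$ of subgroups of $L_j^*$ with $a_i\notin H_i$, and finally produce a full-rank subgroup of $L_j^*$ avoiding all these cosets. Your closing step --- taking $M=NL_j^*$ and choosing $N$ by a case analysis in the quotients $L_j^*/H_i$ --- is correct and is a clean alternative to the paper's Lemma on avoiding a coset (their Lemma \ref{avoid-affine}, applied finitely many times); it even yields the explicit answer $\widetilde{L_j}=\tfrac1N L_j$.

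There is, however, one genuine error at the start. You assert that ``since $\Lambda$ is discrete, the lattices $L_j$ must be pairwise commensurable,'' and you build the formula for $\widehat{\delta_\Lambda}$ on the resulting full-rank intersection $L^-=\bigcap_j L_j$. This deduction is false: a finite union of translated lattices is automatically discrete with finite multiplicities (it is a finite union of closed, locally finite sets), whatever the arithmetic relations among the $L_j$. For instance $\Z\cup\sqrt2\,\Z$ is a discrete subset of $\R$ even though $\Z\cap\sqrt2\,\Z=\{0\}$. Without commensurability, $L^-$ need not be full rank, so $(L^-)^*$, $\vol(L^-)$ and the indices $[L_j:L^-]$ are undefined and your displayed expression for $\widehat{\delta_\Lambda}$ is meaningless. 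Fortunately the error is localized and not load-bearing: nothing in your stratification of $L_j^*$ by the index sets $S$ actually requires a common full-rank sublattice. Apply Poisson summation to each $\delta_{L_k+z_k}$ separately, note that $\supp\widehat{\delta_\Lambda}\subseteq\bigcup_k L_k^*$ is discrete and closed, and read off the local coefficient at $\xi\in L_j^*\setminus\{0\}$ as $\sum_{k:\,\xi\in L_k^*} e^{-2\pi i \dotprod{z_k}{\xi}}/\det(L_k)$ --- this is exactly what the paper does, after normalizing $L_j=\Z^d$, $z_j=0$. With that substitution your stratification, the coset description of the zero set (including the observation that no coset passes through the origin, since the sum of the positive coefficients $1/\det(L_k)$ cannot be zero), and your dilation argument go through unchanged.
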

The author does not know if the assumption that $\Lambda$ is a finite union of translated lattices is necessary. An interesting question is, does there exist a function $f\in L^1(\R^d)$ that tiles by translations but does not tile by any lattice with any weight? This question is also asked by Kolountzakis and Lev in \cite{KL16}, where they construct non-periodic tilings by some $f\in L^1(\R)$, which also admit periodic tilings.

Together with Theorem \ref{Kol00} we solve Problem \ref{GRS12} in the plane.

\begin{corollary}\label{SolProb1}
	If a convex polygon $P$ multi-tiles the plane by translations, it also multi-tiles the plane by a lattice.
\end{corollary}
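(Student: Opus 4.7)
\medskip

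The plan is to combine the two theorems quoted just above and dispose of the parallelogram case by hand. First I would separate off the case in which $P$ is a parallelogram: here $P$ already admits a single lattice tiling (of multiplicity $1$), so there is nothing to prove. From now on assume $P$ is not a parallelogram.

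In this case I would invoke \thmref{Kol00} of Kolountzakis directly. Since $P + \Lambda$ is a multiple tiling and $P$ is a convex polygon which is not a parallelogram, Kolountzakis's theorem yields that the multi-set $\Lambda$ is a finite union of translated lattices, i.e.\
\[
\delta_\Lambda = \sum_{j=1}^n \delta_{L_j + z_j}
\]
for some lattices $L_j \subset \R^2$ and vectors $z_j \in \R^2$.

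At this point the hypotheses of \thmref{main1} are met with $f = \chi_P \in L^1(\R^2)$ (and weight equal to the multiplicity $k$). Applying that theorem (with, say, $j=1$) produces a lattice $\widetilde{L_1} \supseteq L_1$ such that $\chi_P + \widetilde{L_1}$ tiles with some weight $w \in \R$; that is,
\[
\sum_{\lambda \in \widetilde{L_1}} \chi_P(x - \lambda) = w \quad \text{for a.e.\ } x \in \R^2.
\]
The only remaining point to check is that this lattice tiling is genuinely a multi-tiling, i.e.\ that $w$ is a positive integer. But the left-hand side above is a sum of indicator functions and hence integer-valued wherever it is defined, so $w \in \Z_{\ge 0}$; moreover $w>0$ since $\chi_P$ has positive integral and $\widetilde{L_1}$ is a lattice (one can also read off $w = \vol(P)/\det(\widetilde{L_1})$ by integrating over a fundamental domain). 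Thus $P + \widetilde{L_1}$ is a lattice multi-tiling of $\R^2$, which is the desired conclusion.

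Because the work is done by the two cited theorems, there is no serious obstacle in this argument itself; the only thing to be careful about is the parallelogram case (where \thmref{Kol00} does not apply) and the integrality of the new weight, both of which are routine.
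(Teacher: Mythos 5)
Your proposal is correct and follows the same route the paper intends: handle the parallelogram case trivially, then combine \thmref{Kol00} with \thmref{main1} applied to $f=\chi_P$. The extra check that the resulting weight is a positive integer (hence a genuine multiplicity) is a worthwhile detail that the paper leaves implicit.
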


As a remark, we remind the reader that Theorem \ref{Kol00} remains valid for non-convex polygons with the pairing property (see \cite{Kol00}), which means for each edge $e$ there is precisely one other edge parallel to $e$. Therefore Corollary \ref{SolProb1} holds for non-convex polygons with the pairing property as well.

Independently, Corollary \ref{SolProb1} is obtained by Yang (\cite{Yan18}). Her argument is also based on Theorem \ref{Kol00} of Kolountzakis, while elementary and purely combinatorial, thus completely different from ours. Her method does not seem to yield Theorem \ref{main1}.

\subsubsection{A new characterization of convex polygons that multi-tile the plane}
Since Corollary \ref{SolProb1} holds, Bolle's theorem (Theorem \ref{Bol94}) has automatically become a characterization of convex polygons that multi-tiles. However, it is not a very efficient criteria to determine whether a convex polygon multi-tiles. To apply Bolle's theorem, one needs to find a subset $J$ of $\{1,\dots,m\}$ such that
$$span_\Z\{e_j, \tau_{j'}: j\in J, j'\notin J\}$$
is a lattice. If we check this condition in the brute-force way, the computation complexity is exponential in terms of $m$. Then it is natural to look for a more efficient criteria, where the complexity has polynomial growth as $m$ increases. Our second main result in this paper is a new characterization, as well as an efficient criteria, on convex polygons that multi-tile by translations. The proof is based on Theorem \ref{main1} and Bolle's theorem (Theorem \ref{Bol94}).

\begin{thm}\label{refined}
	Suppose $P$ is a convex symmetric polygon as in Figure \ref{fig:polygon} which is not a parallelogram. Then $P$ admits multiple translational tilings if and only if

\begin{enumerate}
	\item $m$ is odd and $\Lambda_\tau:=span_{\Z}\{\tau_1,\dots,\tau_m\}$ is a lattice, or
	\item $m$ is even and there exists $1\leq j_0\leq m$ such that
	\begin{enumerate}
	\item $\Lambda_{j_0}:=span_{\Z}\{\tau_1,\dots,\tau_{j_0-1}, \tau_{j_0+1},\dots,\tau_m\}$ is a lattice, and
	\item $\det(e_{j_0}, \tau_{j_0})$ is an rational multiple of $\det(\Lambda_{{j_0}})$. 
	\end{enumerate}
\end{enumerate}
Moreover, if $P$ multi-tiles by translations, then \begin{equation}\label{L_P}L_P:=\begin{cases}
	\Lambda_\tau, & m \text{ is odd}\\\bigcap\limits_{j:\, \Lambda_j \text{ is a lattice}} \Lambda_{j}, &m \text{ is even}
\end{cases}\end{equation}
is a lattice and $L_P\cap L$ is a lattice for any lattice multi-tiling $P+L$.
\end{thm}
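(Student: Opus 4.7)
The plan is to combine \thmref{Bol94} with \corref{SolProb1}, the latter of which reduces any translational multi-tiling of $P$ to a lattice multi-tiling $P+L$. For such $L$, Bolle provides a partition $\{1,\ldots,m\}=J\sqcup J^c$ satisfying $\tau_j\in L$ for $j\in J^c$, and $e_j\in L$, $\tau_j+t_je_j\in L$ for some $t_j\in\R$ for each $j\in J$. The proof consists in deducing the stated conditions from the existence of such $(L,J)$, and conversely producing such $(L,J)$ from the conditions.

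The key structural input is a parity dichotomy for the preimages $T_j\in\Z^m$ of the $\tau_j$ under the map $\phi:\Z^m\to\R^2$, $\phi(a_1,\ldots,a_m)=\sum_k a_ke_k$. Using $\tau_j-\tau_{j+1}=-(e_j+e_{j+1})$ and $\tau_1+\tau_m=e_m-e_1$, I would first prove as a short lemma: for $m$ odd there is the universal identity $\sum_j(-1)^{j+1}\tau_j=0$ in $\R^2$, coming from the (essentially unique) $\Z$-relation among the $T_j$'s; for $m$ even the $T_j$'s generate all of $\Z^m$, and in particular $e_{j_0}\in\Lambda_{j_0}$ for every $j_0$, which can be extracted by combining the chain relations $T_j-T_{j+1}$ for $j\notin\{j_0-1,j_0\}$ with the two ``boundary'' generators $T_{j_0-1}$ and $T_{j_0+1}$. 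These two facts explain the asymmetry between cases (1) and (2).

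For sufficiency, case (1) is immediate: $L=\Lambda_\tau$ satisfies Bolle with $J=\emptyset$. For case (2), the fact that $e_{j_0}\in\Lambda_{j_0}$ makes $\pi(\Lambda_{j_0})$ a $1$-dimensional lattice in $\R^2/\R e_{j_0}$, while condition (b) translates to $\pi(\tau_{j_0})$ being a rational multiple of its generator; this produces $t\in\R$ with $\tau_{j_0}+te_{j_0}\in\tfrac{1}{N}\Lambda_{j_0}$ for some positive integer $N$, so $L:=\Lambda_{j_0}+\Z(\tau_{j_0}+te_{j_0})$ is a lattice and Bolle is verified with $J=\{j_0\}$. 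For necessity, starting from $(L,J)$ the cases $|J|\le 1$ give the stated conditions directly: $J=\emptyset$ makes $\Lambda_\tau\subseteq L$ a lattice, and $J=\{j_0\}$ makes $\Lambda_{j_0}\subseteq L$ a lattice with $\Z e_{j_0}+\Z(\tau_{j_0}+te_{j_0})$ a sublattice of $L$ of determinant $|\det(e_{j_0},\tau_{j_0})|$, which gives (b) by an index computation.

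The heart of the argument is thus the reduction $|J|\ge 2$ to $|J|\le 1$, which amounts to showing $t_j\in\Q$ for each $j\in J$: once this is done $L':=L+\sum_{j\in J}\Z\tau_j$ is still a lattice (each class $[\tau_j]=[-t_je_j]\in\R^2/L$ is then torsion) and satisfies Bolle with $J'=\emptyset$. For $m$ odd, substituting the odd-$m$ identity into Bolle's conditions gives $\sum_{j\in J}(-1)^{j+1}t_je_j\in L$, and a commensurability argument inside the $2$-torus $\R^2/L$ (using that a finitely generated torsion subgroup of a torus is finite) yields $t_j\in\Q$; for $m$ even, each expansion $e_{j_0}=\sum_{j\ne j_0}c_j\tau_j$ from $e_{j_0}\in\Lambda_{j_0}$ yields, for $j_0\in J$, the analogous relation $\sum_{j\in J\setminus\{j_0\}}c_jt_je_j\in L$, which the same argument turns into rationality. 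The ``moreover'' claim then follows because $L_P$ is a finite intersection of lattices all commensurable with $L$ (hence still a lattice), and $L_P\cap L$ is full-rank since it contains the $\tau_j$ for $j$ in the Bolle $J^c$ of $L$, which span $\R^2$. The main obstacle throughout is this rationality of $t_j$: the remaining steps are formal consequences of Bolle's theorem and lattice algebra, but forcing $t_j\in\Q$ requires the specific combinatorial identities above together with the discreteness of $L$.
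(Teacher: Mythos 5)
Your overall framing (reduce to a lattice multi-tiling via \corref{SolProb1}, then run Bolle's theorem in both directions) matches the paper, and your sufficiency argument is essentially the paper's (\lemref{e=tau} plus the second half of \lemref{quantitative-Bol}). The gap is in the necessity direction, precisely at the step you identify as the heart: forcing $t_j\in\Q$ for $j\in J$ when $|J|\ge 2$. For $m$ odd your only input is the single relation $\sum_{j\in J}(-1)^{j+1}t_je_j\in L$. When $|J|\ge 3$ this is one linear condition on three or more vectors of $\R^2$ and does not force rationality: with $e_1=(1,0)$, $e_2=(0,1)$, $e_3=(1,1)$ in $L=\Z^2$, the choice $t_1=\sqrt2$, $t_2=-\sqrt2$, $t_3=-\sqrt2$ gives $t_1e_1-t_2e_2+t_3e_3=(0,0)\in L$. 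Nothing in your sketch shows that the subgroup of $\R^2/L$ generated by the classes $[t_je_j]=[-\tau_j]$ is torsion, so the principle ``a finitely generated torsion subgroup of a torus is finite'' has no purchase; invoking it assumes the rationality you are trying to prove. The same defect appears for $m$ even as soon as $|J|\ge 4$, where each of your relations has at least three terms (there it is repairable: subtracting the relations for two choices of $j_0$ yields two-term relations because the coefficients $c_j^{(j_0)}$ are globally alternating signs, but you do not take this step).

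The missing ingredient is the structure of the complementary indices $\{j:e_j\notin L\}$, which your relations never see. The paper shows, via the sub-zonotope $P_J$ and the chain relations $\tau_j-\tau_{j+1}=e_j+e_{j+1}$ used pairwise rather than summed into the global alternating identity, that those $e_j$ lie alternately in $L+\gamma$ and $L-\gamma$ for a single $\gamma$, and hence that every index with $e_j\in L$ but $\tau_j\notin L$ satisfies $t_je_j\pm\gamma\in L$. Two such indices with non-parallel $e_j$ then produce a genuine two-term relation $t_je_j\mp t_{j'}e_{j'}\in L$ between non-parallel vectors of $L$, and only in that two-term situation does the integer-matrix inversion give $t_j,t_{j'}\in\Q$ (and then $\gamma\in\Q L$, i.e.\ everything rational). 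So the rationality is always extracted from two-term relations mediated by $\gamma$, not from the $m$-term identity. Separately, your justification of the ``moreover'' clause is shaky: the lattices $\Lambda_j$ are not a priori all commensurable with $L$ (that is again a conclusion of the case analysis, not an input), and for $m$ even $L_P$ need not contain any $\tau_j$ at all since $\tau_j\notin\Lambda_j$ in general, so full rank of $L_P\cap L$ must be argued as in the paper's final paragraph rather than by exhibiting the $\tau_j$ with $j\in J^c$ inside it.
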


As an application, one can easily construct convex symmetric $(2m)$-gons, for any $m\geq 4$, that do not multi-tile by translations (see Example \ref{do-not-multi-tile} in Section \ref{example}). As far as the author knows, these are first known symmetric polygons that do not multi-tile by translations. On the other hand, there are many symmetric $(2m)$-gons that do multi-tile by translations. For example $P+L$ multi-tiles if $P$ is symmetric whose vertices lie in a lattice $L$ (see \cite{GRS12}, \cite{LL18}). This means, unlike tiling (Venkov-McMullen), one can not determine whether a polygon multi-tiles only by its combinatorial type.

\subsubsection{Periodic multiple tilings}\label{Periodic}
The last problem we consider is whether a multiple tiling must be periodic. 

In $\R$, it was proved by Lagarias and Wang (\cite{LW96}) that a bounded region only admits periodic tilings. This result was extended by Kolountzakis and Lagarias (\cite{KL96}) (and proved earlier by Leptin and M\"uller in \cite{LM91}) to tilings by a function $f\in L^1(\R)$ with compact support. More precisely they showed if $f+\Lambda$ tiles and $\Lambda$ has bounded density, then $f+\Lambda$ is a finite union of periodic tilings, with weights. Later Kolountzakis and Lev (\cite{KL16}) showed the assumption $f$ has compact support is necessary. They also proved that if the translation set has finite local complexity, then it must be periodic, even if the support of $f$ is unbounded.


In this paper we answer this question for multiple tilings in the plane. The proof starts from Theorem \ref{Kol00} and eventually we improve it from ``a finite union of translated (possibly different) lattices" to ``a finite union of translations of a single lattice".
\begin{thm}\label{periodic-multi-tilings}
	Suppose $P\subset\R^2$ is a convex polygon which is not a parallelogram. Then every multiple tiling of $P$ is periodic.
\end{thm}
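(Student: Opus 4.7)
The plan is to combine Theorems~\ref{Kol00}, \ref{main1} and \ref{refined} to produce a common periodicity lattice for $\Lambda$. First, since $P$ is not a parallelogram, I would apply Theorem~\ref{Kol00} to the multi-tiling $P+\Lambda$, decomposing
$$\delta_\Lambda=\sum_{j=1}^n\delta_{L_j+z_j}$$
for full-rank lattices $L_j\subset\R^2$ and translations $z_j\in\R^2$. After this reduction, the target becomes purely lattice-theoretic: exhibit a single full-rank lattice $L$ contained in every $L_j$. Any such $L$ preserves each summand $L_j+z_j$ under translation, hence preserves $\Lambda$ as a multi-set, which is exactly the claim of periodicity.

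To manufacture such a common sublattice, I would feed the decomposition into Theorem~\ref{main1} with $f=\chi_P\in L^1(\R^2)$ to upgrade each $L_j$ to a lattice $\widetilde{L_j}\supseteq L_j$ for which $P+\widetilde{L_j}$ is itself a multi-tiling, possibly of different multiplicity. Since $P$ now admits lattice multi-tilings, Bolle's symmetry result (or Corollary~\ref{SolProb1}) forces $P$ to be centrally symmetric, so Theorem~\ref{refined} applies and furnishes the canonical lattice $L_P$ defined in~\eqref{L_P}. The ``moreover'' clause of Theorem~\ref{refined} is the crucial ingredient: it guarantees that $L_P\cap\widetilde{L_j}$ is a full-rank lattice for each $j$.

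The final step is a short linear-algebraic assembly. Because $L_j$ is a full-rank sublattice of $\widetilde{L_j}$ the index $[\widetilde{L_j}:L_j]$ is finite, so $L_P\cap L_j$ sits inside $L_P\cap\widetilde{L_j}$ with finite index and is itself a full-rank lattice. Intersecting over $j$ gives
$$L:=\bigcap_{j=1}^n\bigl(L_P\cap L_j\bigr),$$
a finite intersection of full-rank lattices and hence a full-rank lattice. Since $L\subseteq L_j$ for every $j$, the multi-set $\Lambda$ is invariant under translation by $L$, completing the proof.

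The main obstacle is essentially subsumed by Theorem~\ref{refined}: it is the existence of a canonical lattice $L_P$ intersecting every lattice multi-tiling of $P$ in a full-rank lattice that permits ``synchronizing'' the a~priori unrelated upgrades $\widetilde{L_j}$. Without such a universal reference lattice, distinct multi-tiling lattices could be mutually irrational and their intersection could collapse to rank strictly less than two. Once $L_P$ is in hand, Theorem~\ref{periodic-multi-tilings} is a short corollary of the earlier results.
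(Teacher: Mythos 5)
Your proposal is correct and follows essentially the same route as the paper: Theorem~\ref{Kol00} to get the lattice decomposition, Theorem~\ref{main1} to upgrade each $L_j$ to a lattice multi-tiling $\widetilde{L_j}$, and the ``moreover'' clause of Theorem~\ref{refined} to use $L_P$ as the common reference that keeps all the intersections full-rank. The only caveat is your phrase ``a finite intersection of full-rank lattices and hence a full-rank lattice,'' which is false as a general principle (e.g.\ $\Z^2\cap\sqrt{2}\,\Z^2=\{0\}$) and is valid here only because all the $L_P\cap L_j$ are full-rank sublattices of the single lattice $L_P$ --- precisely the point the paper isolates as Lemma~\ref{sub-lattice}, and one your closing paragraph shows you understand.
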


It can be seen that in Theorem \ref{periodic-multi-tilings} both non-parallelogram and convexity are necessary. In fact in either case there are multiple tilings $P+\Lambda$ where $\Lambda+\alpha\neq\Lambda$ for any $\alpha\in\R^2\backslash\{0\}$. One can see Example \ref{non-periodic} in Section \ref{example}.

In \cite{LW96}, Lagarias and Wang not only proved that $\Omega+\Lambda$ tiles the real line implies
$$\Lambda=\alpha\Z+\{\beta_1,\dots,\beta_n\},  $$
but also showed $\beta_i-\beta_j, \forall 1\leq i, j\leq n$, must be a rational multiple of $\alpha$. This rationality result does not hold for tilings of compactly supported functions (\cite{KL96}). Also it is easily seen to fail for tilings in higher dimensions (parallelepipeds), or decomposable multi-tilings ($\Lambda\cup(\Lambda+z)$ where $\Omega+\Lambda$ tiles). In this paper, we give examples to show, even for indecomposable multi-tilings by convex symmetric polygons that are not parallelograms, rationality still fails. See Example \ref{non-rationality} in Section \ref{example}.

\subsection{Other applications}
\subsubsection{Dimension $1$}
With results of Lagarias-Wang, Kolountzakis-Lagarias, Leptin-M\"uller, Kolountzakis-Lev introduced above, Theorem \ref{main1} implies the following. The case of $f$ of compat support has been proved in \cite{KL96}, Theorem 1.2.
\begin{corollary}
 Suppose 
 $f\in L^1(\R)$ with compact support and $\Lambda \subset\R$ has bounded density, or $f\in L^1(\R)$ and $\Lambda\subset\R$ has finite local complexity. If $f+\Lambda$ tiles with a weight, then there exists a lattice $L\subset\R$ such that $f+L$ also tiles, with a possibly different weight.
\end{corollary}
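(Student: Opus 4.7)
The plan is to reduce both cases to the hypothesis of Theorem \ref{main1}, by showing first that $\Lambda$ is a finite union of translated lattices in $\R$, and then invoking that theorem directly.

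In the first case, where $f\in L^1(\R)$ has compact support and $\Lambda\subset\R$ has bounded density, I would apply the theorem of Kolountzakis and Lagarias (\cite{KL96}), originally due to Leptin and M\"uller (\cite{LM91}), which asserts that $f+\Lambda$ is a finite union of periodic weighted tilings. In dimension one this is equivalent to a decomposition
$$\delta_\Lambda = \sum_{j=1}^{n}\delta_{\alpha\Z + z_j}$$
for some $\alpha>0$ and shifts $z_j\in\R$, counted with the appropriate multiplicities inherited from $\Lambda$. This places $\Lambda$ exactly in the form required by Theorem \ref{main1}, with every $L_j$ equal to the common lattice $\alpha\Z$.

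In the second case, where $\Lambda$ has finite local complexity, I would invoke the theorem of Kolountzakis and Lev (\cite{KL16}), which states that under finite local complexity any tiling set $\Lambda$ must itself be periodic, i.e.\ $\Lambda+v=\Lambda$ for some $v\neq 0$. The group of periods of $\Lambda$ is then a nontrivial discrete subgroup of $\R$, hence of the form $\alpha\Z$ for some $\alpha>0$. Since a fundamental domain $[0,\alpha)$ meets $\Lambda$ in only finitely many points (counted with multiplicity), there exist $z_1,\dots,z_n\in[0,\alpha)$ such that $\delta_\Lambda=\sum_{j=1}^n\delta_{\alpha\Z+z_j}$, again matching the hypothesis of Theorem \ref{main1}.

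In either situation, Theorem \ref{main1} now yields, for each $j$, a lattice $\widetilde{L_j}\supseteq\alpha\Z$ such that $f+\widetilde{L_j}$ tiles with some (possibly different) weight; taking $L:=\widetilde{L_1}$ completes the proof. There is no substantive obstacle here: the only bookkeeping point is to verify that in each case the resulting expression $\sum_j\delta_{L_j+z_j}$ has unit (integer) multiplicities, which is automatic since $\delta_\Lambda$ is a nonnegative integer-valued measure and the translated copies $\alpha\Z+z_j$ are being summed, not weighted. All the analytic and number-theoretic difficulty has already been absorbed into Theorem \ref{main1} and the cited one-dimensional structure theorems.
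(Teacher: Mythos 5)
Your proof is correct and is essentially the argument the paper intends: the paper gives no written proof of this corollary, only the remark that it follows from the cited one-dimensional structure theorems (Kolountzakis--Lagarias/Leptin--M\"uller for the compact-support case, Kolountzakis--Lev for finite local complexity) combined with Theorem \ref{main1}, which is precisely your reduction. One minor overstatement: in the bounded-density case the finitely many periodic components may have incommensurable periods, so you cannot in general write $\delta_\Lambda=\sum_j\delta_{\alpha\Z+z_j}$ with a single common $\alpha$ --- but this is harmless, since the hypothesis of Theorem \ref{main1} allows the lattices $L_j$ to differ.
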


\subsubsection{Higher dimensions}
As we introduced right after Theorem \ref{Kol00}, Gravin, Kolountzakis, Robins and Shiryaev (\cite{GKRS13}) proved that if $P\subset\R^3$ is a convex polytope but not a two-flat zonotope and $P+\Lambda$ multi-tiles, then $\Lambda$ must be a finite union of translated lattices. By Theorem \ref{main1} we obtain the following partial result on Problem \ref{GRS12} in $\R^3$.
\begin{corollary}\label{Cor-3D}
	Suppose $P\subset\R^3$ is a convex polytope, which is not a two-flat zonotope, and $P$ multi-tiles by translations. Then there exists a lattice $L\subset\R^3$ such that $P+L$ multi-tiles.
\end{corollary}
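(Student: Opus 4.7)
The plan is to combine two already-stated ingredients: the structural theorem of Gravin-Kolountzakis-Robins-Shiryaev \cite{GKRS13} recalled in the discussion right after Theorem \ref{Kol00}, and our Theorem \ref{main1}. The deduction should be essentially immediate, since Theorem \ref{main1} was designed precisely to convert a ``finite union of translated lattices'' structural result into a lattice multi-tiling.

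First, I would invoke \cite{GKRS13}: since $P\subset\R^3$ is a convex polytope, is not a two-flat zonotope, and admits some multi-tiling $P+\Lambda$, the multi-set $\Lambda$ must be a finite union of translated lattices. That is,
$$\delta_\Lambda = \sum_{j=1}^{n} \delta_{L_j + z_j}$$
for certain lattices $L_j\subset\R^3$ and translation vectors $z_j\in\R^3$.

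Next, I would apply Theorem \ref{main1} to $f=\chi_P\in L^1(\R^3)$ together with the decomposition above. The hypothesis ``$f+\Lambda$ tiles with a weight'' is satisfied with weight equal to the multiplicity of the given multi-tiling. The conclusion then supplies, for each $j$, a lattice $\widetilde{L_j}\supseteq L_j$ such that $\chi_P+\widetilde{L_j}$ tiles with some weight $w\in\R$; I would pick any such $j$ and set $L:=\widetilde{L_j}$.

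Finally, it remains only to promote ``tiles with weight $w\in\R$'' to a genuine multi-tiling. Because $P$ is bounded and $L$ is a lattice, for every $x\in\R^3$ the convolution $\chi_P*\delta_L(x)=\sum_{\lambda\in L}\chi_P(x-\lambda)$ is a finite sum of $0$'s and $1$'s, hence a non-negative integer; the identity $\chi_P*\delta_L(x)=w$ holding for almost every $x$ therefore forces $w\in\Z$, and integrating over a fundamental domain of $L$ gives $w=\vol(P)/\det(L)>0$, so $w\in\Z_+$ and $P+L$ multi-tiles with multiplicity $w$. There is no genuine obstacle here---all the substantive content is packaged in \cite{GKRS13} and Theorem \ref{main1}, and the corollary amounts to a bookkeeping step combining them.
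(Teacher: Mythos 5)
Your proposal is correct and follows exactly the paper's route: the corollary is stated immediately after recalling the Gravin--Kolountzakis--Robins--Shiryaev structure theorem for non-two-flat zonotopes in $\R^3$, and the paper derives it precisely by feeding that decomposition of $\Lambda$ into Theorem \ref{main1} with $f=\chi_P$. Your closing observation that the resulting weight is automatically a positive integer is a small piece of bookkeeping the paper leaves implicit, but it is correct and does not change the argument.
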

Although Gravin, Kolountzakis, Robins and Shiryaev (\cite{GKRS13}) gave examples of two-flat zonotopes which admit weird multiple tilings, their examples admit periodic multi-tilings as well. So whether Corollary \ref{Cor-3D} holds for general convex polytopes in $\R^3$ is still unknown.

There is very little known in dimension $4$ and higher. In fact there exists centrally symmetric polytopes, with centrally symmetric facets, that multi-tile by translations but are not zonotopes (e.g. the $24$-cell in $\R^4$), which makes the study of multiple tilings in higher dimensions more difficult than in lower dimensions.

\subsubsection{Riesz basis}
We say $\Omega\subset\R^d$ admits an exponential Riesz basis if there exists a discrete set $\Lambda\subset\R^d$, $A, B>0$ such that
$$ A||f||^2_{L^2(\Omega)}\leq \sum_{\lambda\in\Lambda} \left|\widehat{f\chi_\Omega}(\lambda)\right|^2 \leq B||f||^2_{L^2(\Omega)}$$
and
$$ A\sum_{\lambda\in\Lambda} |c_\lambda|^2\leq \int_\Omega \left|\sum_{\lambda\in\Lambda} c_\lambda e^{-2\pi i x\cdot\lambda}\right|^2\,dx \leq B\sum_{\lambda\in\Lambda} |c_\lambda|^2. $$
The connection between multiple tiling and exponential Riesz basis was first discovered by Grepstad and Lev (\cite{GL14}) in 2014, and later reproved by Kolountzakis (\cite{Kol15}) in 2015 with an elementary argument. They proved that if a bounded region $\Omega\subset\R^d$ multi-tiles by a lattice, then it admits an exponential Riesz basis.

Since we have proved that every convex polygon $P$ that multi-tiles the plane admits a lattice multi-tiling, it follows that a convex polygon admits an exponential Riesz basis if it multi-tiles (no need to assume lattice multi-tiling). Also a sufficient condition on the existence of exponential Riesz bases follows from Theorem \ref{refined}.
\begin{corollary}
Let $P$ be a convex polygon in the plane. Then $P$ admits an exponential Riesz basis if it admits translational multi-tilings.
\end{corollary}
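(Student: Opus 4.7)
The plan is to reduce the statement to a direct combination of two already-established results: Corollary~\ref{SolProb1} of this paper, and the Grepstad--Lev theorem (\cite{GL14}, reproved in \cite{Kol15}) which asserts that every bounded region in $\R^d$ admitting a lattice multi-tiling also admits an exponential Riesz basis. Since a convex polygon $P\subset\R^2$ is automatically bounded, the only missing link between the hypothesis (admitting some translational multi-tiling with an arbitrary discrete multiset) and the Grepstad--Lev hypothesis (admitting a \emph{lattice} multi-tiling) is exactly what Corollary~\ref{SolProb1} supplies.

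Concretely, I would argue as follows. First split into cases according to whether $P$ is a parallelogram. If $P$ is a parallelogram, it is a fundamental domain of a lattice and hence admits a lattice tiling (multiplicity one), so Grepstad--Lev applies at once. Otherwise, $P$ is a convex polygon that is not a parallelogram and, by hypothesis, admits a multiple tiling $P+\Lambda$ for some discrete multiset $\Lambda$. Invoking Corollary~\ref{SolProb1}, there exists a lattice $L\subset\R^2$ such that $P+L$ multi-tiles the plane (possibly with a different multiplicity from that of $P+\Lambda$). Applying the Grepstad--Lev theorem to the bounded region $P$ and the lattice $L$ then produces a discrete set $\Lambda'\subset\R^2$ and constants $A,B>0$ verifying the two Riesz-basis inequalities stated just above the corollary.

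There is essentially no obstacle: all of the substantive work was already carried out in proving Theorem~\ref{main1} and deducing Corollary~\ref{SolProb1}, and the Grepstad--Lev step is quoted as a black box. The only thing worth being careful about is the logical structure of the quoted results: Grepstad--Lev requires a \emph{lattice} multi-tiling, not an arbitrary one, which is precisely why Corollary~\ref{SolProb1} is indispensable here and why the remark immediately preceding the statement stresses that no lattice assumption on the original multi-tiling need be made. Note also that the parallelogram case, while trivial, should be recorded separately since Corollary~\ref{SolProb1} (via Theorem~\ref{Kol00}) was stated for non-parallelogram polygons; this is the only minor bookkeeping point in an otherwise one-line deduction.
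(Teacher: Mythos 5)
Your proposal is correct and follows exactly the paper's route: the paper deduces this corollary in one line by combining Corollary~\ref{SolProb1} with the Grepstad--Lev theorem on lattice multi-tilings. Your extra remark on the parallelogram case (where Corollary~\ref{SolProb1} rests on Theorem~\ref{Kol00}, stated for non-parallelograms, but a parallelogram trivially lattice-tiles) is a harmless and reasonable bit of bookkeeping that the paper leaves implicit.
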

Similar to the remark right after Corollary \ref{SolProb1}, this corollary also holds for non-convex polygons with the pairing property.

During the referee process, Debernardi and Lev \cite{DL19} proved a much stronger result. They show that any zonotope in $\R^d$, $d\geq 1$, admits an exponential Riesz basis. Notice in the plane any convex multiple tile is necessarily a zonotope, but not vice versa.




{\bf Organization.} This paper is organized as follows. In Section \ref{Prelim} we review useful tools from Fourier analysis and number theory. Then we prove Theorem \ref{main1}, \ref{refined}, \ref{periodic-multi-tilings} in Section \ref{pf-main1}, \ref{pf-refined}, \ref{pf-perodic}, respectively. In Section \ref{example} we discuss some examples. In the Appendix, we give a proof of that any convex body in $\R^d$ that multi-tiles by translations must be a convex polytope.

{\bf Notation.} Throughout this paper a lattice of $\R^d$ is a discrete subgroup of the additive group $\R^d$ which is isomorphic to the additive group $\Z^d$.

We say $\Lambda$ is a finite union of translated lattices if it is a multi-set and
$$\delta_\Lambda=\sum_{j=1}^n \delta_{L_j+{z}_j},  $$
where $L_j$ are (possibly different) lattices and ${z}_j\in\R^d$.

We say a (multiple) tiling $P+\Lambda$ is periodic if there exists a lattice $L$ such that $\Lambda+L=\Lambda$, which is equivalent to that $\Lambda$ is a finite union of translations of $L$.

{\bf Acknowledgment.} The author would like to thank Nir Lev for bringing this subject to attention, useful discussions, and comments on manuscript.

\section{Preliminaries}\label{Prelim}
\subsection{Fourier analysis}
Let $L$ be a lattice in $\R^d$ and denote its dual lattice as
$$L^*=\{\lambda^*\in\R^d: \lambda^*\cdot\lambda\in\Z, \forall \lambda\in L\}.  $$
For $f\in L^1(\R^d)$, define its Fourier transform as
$$\hat{f}(\xi) = \int_{R^d} e^{-2\pi i x\cdot\xi} f(x)\,dx.$$

Denote $\det(L)$ as the volume of a fundamental domain of a lattice $L\subset\R^d$. The well-known Poisson summation formula can be stated as
$$\sum_{\lambda\in L}\phi (\lambda+{z})=\frac{1}{\det(L)}\sum_{\lambda^*\in L^*} e^{2\pi i \lambda^*\cdot {z}} \,\hat{\phi}(\lambda^*) $$
for any Schwartz function $\phi$. In the sense of distributions, it is equivalent to
\begin{equation}\label{Poisson-distribution}\widehat{\delta_{L+{z}}}(\xi)= \frac{1}{\det(L)}\sum_{\lambda^*\in L^*}e^{-2\pi i \xi\cdot{z}}\,\delta_{\lambda^*}(\xi)=\frac{e^{-2\pi i \xi\cdot{z}}}{\det(L)}\,\delta_{L^*}(\xi).\end{equation}

We also need the following well-known lemma that connects Fourier analysis and multiple tilings. 
\begin{lem}\label{vanish}
	Let $f\in L^1(\R^d)$ and $L$ be a lattice in $\R^d$. Then  $f+L$ tiles with a weight if and only if $\hat{f}$ vanishes on $L^*\backslash\{0\}$.
\end{lem}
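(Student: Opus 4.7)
The plan is to recognize that $f + L$ tiling with weight $w$ means the $L$-periodization $F(x) := f * \delta_L(x) = \sum_{\lambda \in L} f(x - \lambda)$ equals the constant $w$ almost everywhere, and to translate this condition into a statement about the Fourier side using the distributional Poisson summation formula already recalled in \eqref{Poisson-distribution}.

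First I would check that $F$ is well-defined in $L^1_{\mathrm{loc}}(\R^d)$ and is $L$-periodic: this follows from $f \in L^1(\R^d)$ by Fubini applied on a fundamental domain $D$ of $L$, giving $\int_D |F| \le \|f\|_{L^1}$, and periodicity is immediate from the definition. Hence $F$ descends to a function in $L^1(\R^d/L)$, and its Fourier series on the torus is determined by the coefficients
\[
c(\lambda^*) = \frac{1}{\det(L)} \int_D F(x)\, e^{-2\pi i \lambda^* \cdot x}\, dx, \qquad \lambda^* \in L^*.
\]
A standard unfolding, substituting the definition of $F$ and interchanging sum and integral, yields $c(\lambda^*) = \frac{1}{\det(L)} \hat f(\lambda^*)$ for every $\lambda^* \in L^*$.

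Now $F \equiv w$ a.e.\ if and only if the only nonzero Fourier coefficient of $F$ on $\R^d/L$ is $c(0) = w$. By the computation above, this is equivalent to $\hat f(\lambda^*) = 0$ for all $\lambda^* \in L^* \setminus \{0\}$, with the weight given by $w = \hat f(0)/\det(L)$. The forward and backward directions are therefore both settled by uniqueness of Fourier coefficients on the compact abelian group $\R^d/L$.

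As a cleaner alternative I could give a purely distributional argument: since $f \in L^1(\R^d)$ and $\delta_L$ is a tempered distribution, the convolution $f * \delta_L$ is tempered, and
\[
\widehat{f * \delta_L} = \hat f \cdot \widehat{\delta_L} = \frac{1}{\det(L)} \sum_{\lambda^* \in L^*} \hat f(\lambda^*)\, \delta_{\lambda^*}
\]
by \eqref{Poisson-distribution}, where the product is justified because $\hat f$ is continuous and bounded. Then $f * \delta_L = w$ a.e.\ if and only if the Fourier transform equals $w\, \delta_0$, which is precisely the vanishing of $\hat f$ on $L^* \setminus \{0\}$. The main (and only) subtlety is justifying the multiplication $\hat f \cdot \widehat{\delta_L}$ and the interchange of sum and integral in the unfolding step; both are routine given continuity of $\hat f$ and the Riemann--Lebesgue lemma, so I do not expect any real obstacle.
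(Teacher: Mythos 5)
Your first argument is correct and is essentially the paper's proof: both periodize $f$ over $L$ and identify the Fourier coefficients of the resulting function on $\R^d/L$ with the values $\hat f(\lambda^*)$, $\lambda^*\in L^*$, concluding by uniqueness of Fourier coefficients in $L^1$ of the torus (the paper normalizes to $L=\Z^d$ and cites Stein--Weiss for the unfolding identity you compute by hand). The distributional variant you sketch is a reasonable alternative but adds nothing beyond the first argument.
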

We give a proof below for completeness.
\begin{proof}
We may assume $L = \Z^d$. Let
\[
F(x) := \sum_{\lambda \in \Z^d} f(x - \lambda),
\]
then $F$ is a $\Z^d$-periodic function whose Fourier series is given by
\[
\sum_{\lambda \in \Z^d}  \hat{f}(\lambda) e^{2 \pi i \dotprod{\lambda}{x}}
\]
(see e.g.\ \cite{SW71}, Chapter VII, Theorem 2.4).
Hence $f$ coincides a.e.\ with a constant function, if and only if $\ft{f}$ vanishes on $\Z^d \setminus \{0\}$.
\end{proof}

\subsection{Solutions of linear equations}
Let $K$ be an algebraically closed field of characteristic $0$ and denote $K\backslash\{0\}$ as its multiplicative group of nonzero elements. Let $(a_1,\dots, a_n)\in (K\backslash\{0\})^n$ and $\Gamma$ be a subgroup of $(K\backslash\{0\})^n$. One may ask how many solutions does the linear equation 
\begin{equation}\label{equation}a_1x_1+\cdots+a_nx_n=1  \end{equation}
have with $(x_1,\dots, x_n)\in\Gamma$. This problem has been studied for a long time in number theory and literature dates back to early 1930s (e.g., \cite{Mah33}). Finally, in 2002, Evertse, Schlickewei and Schmidt (\cite{ESS02}) proved the following celebrated result.

We say $\Gamma$ has finite rank $r$, if there exists a finitely generated subgroup $\Gamma_0$ of $\Gamma$, again of rank $r$, such that the factor group $\Gamma/\Gamma_0$ is a torsion group.

\begin{thm}
	[Evertse, Schlickewei, Schmidt, 2002]
	With notation above, suppose $\Gamma$ has finite rank $r$. Then $A(a_1,\dots,a_n,\Gamma)$, the number of non-degenerate solutions $(x_1,\dots,x_n)\in\Gamma$ of equation \eqref{equation} satisfies the estimate
	$$ A(a_1,\dots,a_n,\Gamma)\leq A(n,r) = \exp\left((6n)^{3n}(r+1)\right). $$
	Here a solution $(x_1,\dots, x_n)\in\Gamma$ is called non-degenerate if $\sum_{i\in I} a_i x_i\neq 0$ for every nonempty subset $I\subset\{1,\dots,n\}$.
\end{thm}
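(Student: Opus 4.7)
This is a celebrated result in Diophantine number theory whose full proof is well beyond the scope of a sketch; my plan is only to outline the standard strategy, which runs through the quantitative subspace theorem of Schmidt and Schlickewei. In the remainder of the paper I would, in any case, invoke the statement as a black box.

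\textbf{Step 1: reduction to finitely generated $\Gamma$.} Since $\Gamma$ has finite rank $r$, fix $\Gamma_0\subset\Gamma$ finitely generated of rank $r$ with $\Gamma/\Gamma_0$ torsion. In the multiplicative group $K\backslash\{0\}$ of an algebraically closed characteristic-zero field the torsion elements are precisely the roots of unity, so each $x_i\in\Gamma$ factors as $x_i=\zeta_i y_i$ with $\zeta_i$ a root of unity and $y_i\in\Gamma_0$. Substituting into \eqref{equation} yields $\sum_{i=1}^n (a_i\zeta_i)y_i=1$, a new non-degenerate equation in $\Gamma_0^n$ with modified coefficients. A combinatorial argument, using that no proper subsum vanishes, shows that only boundedly many tuples $(\zeta_1,\dots,\zeta_n)$ can arise, at the cost of a factor depending only on $n$ in the final count; this reduces matters to the finitely generated case.

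\textbf{Step 2: subspace theorem and induction on $n$.} A finitely generated subgroup of $K\backslash\{0\}$ of rank $r$ embeds into the $S$-unit group of a number field $F$ with $|S|$ bounded in terms of $r$, turning \eqref{equation} into an $S$-unit equation. The absolute parametric quantitative subspace theorem then places the projective solutions $(x_1:\dots:x_n)$ into a bounded number of proper linear subspaces of $K^n$, the count being explicit in $n$ and $|S|$, hence in $n$ and $r$. Within each such subspace, after a linear change of coordinates the equation restricts to a non-degenerate unit equation in fewer variables, so induction on $n$ applies, with the trivial base case $n=1$. Careful tracking of the constants through the subspace theorem and the induction yields a bound of the shape $C(n)^{r+1}$; in the optimized form of Evertse-Schlickewei-Schmidt this takes the explicit value $\exp((6n)^{3n}(r+1))$.

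\textbf{Main obstacle.} The entire argument rests on the absolute, uniform, quantitative form of the subspace theorem, itself one of the deepest results in Diophantine approximation. The principal difficulty is obtaining uniformity in the coefficients $a_i$ and in the embedding of $\Gamma$ into $S$-units, so that the bound depends only on $n$ and $r$ and not on any arithmetic invariant of the $a_i$ or the ambient number field. This requires the fully absolute parametric version rather than the classical subspace theorem over a fixed number field, and without it even the finiteness half of the statement (already a theorem of Evertse when $n=2$) is nontrivial.
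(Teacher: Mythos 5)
The paper does not prove this statement: it is quoted verbatim from Evertse, Schlickewei and Schmidt and used purely as a black box (only through Corollary~2.3), so there is no internal proof to compare yours against, and your decision to invoke it as an external result matches exactly how the paper uses it. Your outline of the actual ESS argument is broadly faithful to the real proof (reduction to number fields, the absolute quantitative subspace theorem, induction on the number of variables), but one step is stated too casually: since $\Gamma/\Gamma_0$ is only assumed to be torsion, an element $x\in\Gamma$ satisfies $x^m\in\Gamma_0$ for some $m$, which makes $x$ an element of the \emph{division group} of $\Gamma_0$ rather than a root-of-unity multiple of an element of $\Gamma_0$; the standard treatment therefore enlarges $\Gamma_0$ to its division group (which still has rank $r$) instead of factoring $x_i=\zeta_i y_i$ with $y_i\in\Gamma_0$. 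This does not matter for the paper, where the theorem is only ever applied to the concrete rank-$\le d$ group $\Gamma_{z_1,\dots,z_n,L}$ and only the finiteness conclusion is used, but if you intended the sketch to stand as a proof it would need that correction and, of course, the full force of the quantitative subspace theorem, which cannot be reproduced here.
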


In particular, given $z_1,\dots,z_n\in\R^d$ and a lattice $L\subset\R^d$, take $K=\C$ and $$\Gamma_{z_1,\dots,z_n,L} = \{(e^{-2\pi i \lambda\cdot z_1},\dots,e^{-2\pi i \lambda\cdot z_n}):\lambda\in L\}.$$
The following corollary plays an important role in our proof of Theorem \ref{main1}.
\begin{corollary}\label{Sol-linear equation}
	Given $a_1,\dots,a_n\in\C\backslash\{0\}$, $z_1,\dots,z_n\in \R^d$ and a lattice $L\subset\R^d$, then for any nonempty subset $I\subset\{1,\dots,n\}$, the linear equation 
	$$\sum_{i\in I} a_ix_i=1$$
	has finitely many non-degenerate solutions in $\Gamma_{z_1,\dots,z_n,L}$. In particular,
$$\#\{(x_1,\dots,x_n)\in\Gamma_{z_1,\dots,z_n,L}: \exists\ \emptyset\neq I\subset\{1,\dots,n\}, \sum_{i\in I} a_ix_i=1  \}<\infty  $$
\end{corollary}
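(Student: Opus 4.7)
The plan is to recognize $\Gamma := \Gamma_{z_1,\dots,z_n,L}$ as a finitely generated multiplicative subgroup of $(\C\backslash\{0\})^n$ so that the Evertse--Schlickewei--Schmidt theorem stated above applies directly. The map $\phi:L\to(\C\backslash\{0\})^n$ defined by $\phi(\lambda)=(e^{-2\pi i\lambda\cdot z_1},\dots,e^{-2\pi i\lambda\cdot z_n})$ is a group homomorphism from the additive group of $L$ to the multiplicative group $(\C\backslash\{0\})^n$, and its image is exactly $\Gamma$. Since $L\cong\Z^d$ is finitely generated of rank $d$, the image $\Gamma$ is finitely generated of rank at most $d$, placing it comfortably within the finite-rank hypothesis required by Evertse--Schlickewei--Schmidt.

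Fixing a non-empty $I\subseteq\{1,\dots,n\}$, I would then pass to the coordinate projection $\pi_I(\Gamma)\subseteq(\C\backslash\{0\})^{|I|}$, which is again a subgroup of finite rank at most $d$. Applying the Evertse--Schlickewei--Schmidt theorem directly to the equation $\sum_{i\in I}a_iy_i=1$ inside $\pi_I(\Gamma)$ yields the finite bound $\exp((6|I|)^{3|I|}(d+1))$ on the number of non-degenerate solutions; this settles the first claim.

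For the ``in particular'' assertion I would argue by a minimality reduction. Given any tuple $(x_1,\dots,x_n)\in\Gamma$ with $\sum_{i\in I}a_ix_i=1$ for some non-empty $I$, choose $J\subseteq I$ minimal with respect to the property $\sum_{i\in J}a_ix_i=1$. Minimality forces $(x_i)_{i\in J}$ to be a non-degenerate solution of $\sum_{i\in J}a_ix_i=1$: if a partial sum over $\emptyset\neq J'\subsetneq J$ vanished, then the complement $J\backslash J'$ would realise the value $1$ and contradict the minimality of $J$, while the full sum over $J$ equals $1\neq 0$. Since there are fewer than $2^n$ candidate sets $J$, each contributing only finitely many non-degenerate solutions by the first part, a finite union bound delivers the global finiteness statement.

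The main difficulty I anticipate is the bookkeeping: Evertse--Schlickewei--Schmidt naturally counts tuples inside the projected group $\pi_J(\Gamma)$, whereas the ``in particular'' statement counts tuples in $\Gamma\subset(\C\backslash\{0\})^n$. The minimality reduction above is the mechanism meant to translate between the two, and I expect the bulk of the effort to go into keeping track of which subset $J$ governs which tuple and ensuring that the counts do not multiply uncontrollably in the passage from projected solutions back to full $n$-tuples in $\Gamma$.
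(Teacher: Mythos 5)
Your treatment of the first claim is correct and is exactly the route the paper intends (the paper states this corollary without proof, as an immediate consequence of the Evertse--Schlickewei--Schmidt theorem): $\Gamma_{z_1,\dots,z_n,L}$ is the image of $L\cong\Z^d$ under a group homomorphism into $(\C\backslash\{0\})^n$, hence finitely generated of rank at most $d$, and so is each coordinate projection $\pi_I(\Gamma)$; applying the theorem to $\sum_{i\in I}a_iy_i=1$ inside $\pi_I(\Gamma)$ gives finitely many non-degenerate solutions. Your minimality reduction for degenerate solutions is also sound: if $J\subseteq I$ is minimal with $\sum_{i\in J}a_ix_i=1$, then no subsum over a nonempty $J'\subseteq J$ can vanish, since otherwise $J\setminus J'$ would be a strictly smaller witness, and the full sum over $J$ equals $1\neq 0$.

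The difficulty you flag at the end, however, is not mere bookkeeping: the ``in particular'' assertion, read literally as a count of full $n$-tuples in $\Gamma$, is false, and no argument will close that gap. Take $d=1$, $L=\Z$, $n=2$, $z_1=0$, $z_2=\sqrt{2}$, $a_1=1$. Then every element of $\Gamma=\{(1,e^{-2\pi i k\sqrt{2}}):k\in\Z\}$ satisfies $\sum_{i\in\{1\}}a_ix_i=1$, and these tuples are pairwise distinct, so the displayed set is infinite. What your minimality argument genuinely proves --- and what is all that is used later, since the lemma in Section~\ref{pf-main1} only needs the level sets $\left\{\lambda^*: e^{-2\pi i\lambda^*\cdot z_j}=e^{-2\pi i\lambda_0^*\cdot z_j},\ j\in I\right\}$, which depend on the coordinates indexed by $I$ alone --- is the weaker statement that the set of restricted tuples $(x_i)_{i\in J}$ is finite, as $J$ ranges over the at most $2^n$ nonempty subsets and the tuple ranges over all witnesses. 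You should state and prove that version; the step that fails is passing from finiteness of the projections to finiteness of the full $n$-tuples, because the coordinates outside $J$ are completely unconstrained.
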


\section{Proof of Theorem \ref{main1}}\label{pf-main1}
Now $\Lambda$ is a finite union of translated lattices,
$$\delta_\Lambda = \sum_{j=1}^n \delta_{L_j+{z}_j}. $$
Without loss of generality, we may assume $L_1=\Z^d$, $z_1=0$ and $n\geq 2$. Then, by Lemma \ref{vanish}, it suffices to find a lattice $L^*\subset\Z^d$ such that $\hat{f}$ vanishes on $L^*\backslash\{0\}$.

By Poisson summation formula \eqref{Poisson-distribution}, 
$$\hat{\delta_\Lambda}(\xi) = \delta_{\Z^d}+\sum_{j=2}^n \frac{e^{-2\pi i \xi\cdot {z}_j}}{\det{(L_j)}}\delta_{L_j^*}(\xi). $$
Denote 
\begin{equation}\label{omega}\omega_j(\lambda^*)=\begin{cases}1, \ \lambda^*\in L_j^* \\ 0, \ otherwise\end{cases}.  \end{equation}
In a small neighborhood $U_{\lambda^*}$ of each $\lambda^*\in\Z^d$,
$$\hat{\delta_\Lambda}\big|_{U_{\lambda^*}} = \left(1+\sum_{j\geq 2} \omega_j(\lambda^*)\frac{e^{-2\pi i \lambda^*\cdot {z}_j}}{\det{(L_j)}}\right)\delta_{\lambda^*}. $$

Since $f*\delta_\Lambda$ is a constant almost everywhere, its Fourier transform is a multiple of $\delta_0$. Therefore on a small neighborhood $U_{\lambda^*}$ of each $\lambda^*\in\Z^d\backslash\{0\}$,
$$0=\widehat{f*\delta_\Lambda}\big|_{U_{\lambda^*}}=\hat{f} \cdot\hat{\delta_\Lambda} \big|_{U_{\lambda^*}} = \left(1+\sum_{j\geq 2} \omega_j(\lambda^*)\frac{e^{-2\pi i \lambda^*\cdot {z}_j}}{\det{(L_j)}}\right)\hat{f}(\lambda^*)\,\delta_{\lambda^*},  $$
which implies that for any $\lambda^*\in\Z^d\backslash\{0\}$, either $\hat{f}(\lambda^*)=0$ or
$$\sum_{j\geq 2} -\omega_j(\lambda^*)\frac{1}{\det{(L_j)}}e^{-2\pi i \lambda^*\cdot {z}_j}=1.  $$

Therefore, to find a lattice $L^*\subset\Z^d$ such that $\hat{f}$ vanishes on $L^*\backslash\{0\}$, it suffices to find a lattice $L^*\subset\Z^d$ such that
$$\sum_{j\in I} -\frac{1}{\det{(L_j)}}e^{-2\pi i \lambda^*\cdot {z}_j}\neq 1  $$
for any $\lambda^*\in L^*\backslash\{0\}$ and any nonempty subset $I\subset\{2,\dots,n\}$.


\begin{lem}
$$\left\{\lambda^*\in\Z^d: \exists\, \emptyset\neq I\subset \{2,\dots,n\}, \sum_{j\in I} -\frac{1}{\det{(L_j)}}e^{-2\pi i \lambda^*\cdot {z}_j}= 1\right\}$$
is a finite union of cosets of subgroups of $\Z^d$, where each coset does not contain the origin.
\end{lem}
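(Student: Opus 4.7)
The plan is to control the set via \corref{Sol-linear equation} (Evertse--Schlickewei--Schmidt), one index set $I$ at a time. For each nonempty $I \subset \{2,\ldots,n\}$, set $a_j := -1/\det(L_j)$ and
\[
  S_I := \Bigl\{\lam^* \in \Z^d : \sum_{j \in I} a_j\, e^{-2\pi i \lam^* \cdot z_j} = 1\Bigr\}.
\]
The set in the lemma equals $\bigcup_{\emptyset\neq I \subset \{2,\ldots,n\}} S_I$, so it suffices to exhibit each $S_I$ as a finite union of cosets of subgroups of $\Z^d$ not containing the origin.

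Call $\lam^* \in S_I$ \emph{non-degenerate} if no proper nonempty subsum vanishes, and write $S_I^{\mathrm{nd}}$ for this subset. By \corref{Sol-linear equation} applied to the equation $\sum_{j\in I} a_j x_j = 1$ in the group $\Gamma_{(z_j)_{j\in I}, \Z^d}$, only finitely many tuples $(x_j)_{j\in I}$ arise from non-degenerate solutions. For each such tuple, its preimage under the homomorphism $\lam^* \mapsto (e^{-2\pi i \lam^* \cdot z_j})_{j\in I}$ from $\Z^d$ to the torus is either empty or a single coset of the subgroup $G_I := \{\lam^* \in \Z^d : \lam^* \cdot z_j \in \Z \text{ for all } j \in I\}$. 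Hence $S_I^{\mathrm{nd}}$ is a finite union of cosets of $G_I$.

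The main step is reducing arbitrary $\lam^* \in S_I$ to non-degenerate ones, since \corref{Sol-linear equation} does not directly control degenerate solutions. If $\lam^* \in S_I$ is degenerate, some proper nonempty $I' \subsetneq I$ satisfies $\sum_{j \in I'} a_j e^{-2\pi i \lam^* \cdot z_j} = 0$, and so $\sum_{j \in I\setminus I'} a_j e^{-2\pi i \lam^* \cdot z_j} = 1$, i.e. $\lam^* \in S_{I \setminus I'}$. Iterating on $|I|$ (the base case $|I|=1$ is automatically non-degenerate) yields
\[
  \bigcup_{\emptyset \neq I} S_I \;=\; \bigcup_{\emptyset \neq I} S_I^{\mathrm{nd}},
\]
which is a finite union of cosets of subgroups of $\Z^d$ by the previous paragraph. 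Finally, $\lam^* = 0$ forces every exponential to equal $1$, whence $\sum_{j\in I} a_j = -\sum_{j\in I} 1/\det(L_j) < 0 \neq 1$; hence $0$ lies in no $S_I$, and consequently no coset in the resulting decomposition can contain the origin.
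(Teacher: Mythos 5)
Your proof is correct and follows essentially the same route as the paper's: apply \corref{Sol-linear equation} to get finitely many solution tuples, observe that the preimage of each tuple under $\lam^*\mapsto(e^{-2\pi i\lam^*\cdot z_j})_j$ is a coset of a subgroup of $\Z^d$, and rule out the origin because $\sum_{j\in I}-1/\det(L_j)<0\neq 1$. Your explicit reduction of degenerate solutions to non-degenerate ones for a smaller index set (via $S_I\subset\bigcup_{I''}S_{I''}^{\mathrm{nd}}$), and your restriction to the coordinates indexed by $I$ only, are in fact slightly more careful than the paper's appeal to the ``in particular'' clause of \corref{Sol-linear equation}.
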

\begin{proof}
By Corollary \ref{Sol-linear equation},
\begin{equation}\label{finite-sol}\left\{(e^{-2\pi i \lambda^*\cdot z_1},\dots,e^{-2\pi i \lambda^*\cdot z_n}): \lambda^*\in\Z^d, \exists\, \emptyset\neq I\subset \{2,\dots,n\}, \sum_{j\in I} -\frac{e^{-2\pi i \lambda^*\cdot {z}_j}}{\det{(L_j)}}= 1\right\}\end{equation}
has finitely many elements. Then it suffices to show for each solution $(e^{-2\pi i \lambda_0^*\cdot z_1},\dots,e^{-2\pi i \lambda_0^*\cdot z_n})$, $\lambda^*_0\in\Z^d$, of
$$\sum_{j\in I} -\frac{1}{\det{(L_j)}}e^{-2\pi i \lambda^*\cdot z_j}= 1,$$
for some $\emptyset\neq I\subset \{2,\dots,n\}$, the set
$$L_{\lambda_0^*}:=\left\{\lambda^*\in\Z^d: e^{-2\pi i \lambda^*\cdot {z}_j}=e^{-2\pi i \lambda_0^*\cdot z_j}, j\in I\right\}$$
is a coset of a subgroup of $\Z^d$ which does not contain the origin. It is easy to see $L_{\lambda^*_0}$ is a coset of
$$\left\{\lambda^*\in\Z^d: e^{-2\pi i \lambda^*\cdot {z}_j}=1, j\in I\right\}.$$ 
For any $\emptyset\neq I\subset \{2,\dots,n\}$, since $(1,\dots,1)$ is not a solution of 
$$\sum_{j\geq 2,\  j\in I} -\frac{x_j}{\det{(L_j)}}= 1,$$ one concludes that $L_{\lambda_0^*}$ does not contain the origin.
\end{proof}

Then Theorem \ref{main1} follows by applying the following lemma finitely many times.
\begin{lem}\label{avoid-affine}
	Let $L\subset\R^d$ be a lattice and $V\subset L$ be a subgroup. Then for any $\tau_V\in L\backslash V$, there exists a lattice $L'\subset L$ such that $L'\cap (V+\tau_V)=\emptyset$.
\end{lem}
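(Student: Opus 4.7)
The plan is to recast the geometric condition as a statement about the finitely generated abelian quotient $L/V$. Writing $\pi \colon L \to L/V$ for the natural projection, the condition $L' \cap (V + \tau_V) = \emptyset$ is equivalent to $\pi(\tau_V) \notin \pi(L')$. Since $\tau_V \in L \setminus V$, we have $\pi(\tau_V) \neq 0$, so the task becomes: produce a full-rank sublattice $L' \subset L$ whose image in $L/V$ misses this specific nonzero element.

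My first guess is $L' := NL$ for a suitable positive integer $N$. This is automatically a full-rank sublattice of $L$, and since $\pi$ is a surjective homomorphism, $\pi(L') = N\cdot(L/V)$, the subgroup of $N$-divisible elements. So the problem reduces to: find $N \in \Z_{>0}$ such that $\pi(\tau_V) \notin N\cdot(L/V)$. Invoking the structure theorem, I would decompose $L/V \cong \Z^r \oplus T$ with $T$ a finite torsion group, and write $\pi(\tau_V) = (v, t)$ accordingly.

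The argument then splits into two cases. If the free part $v \neq 0$, pick any coordinate $v_j \neq 0$ and take $N > |v_j|$; then $v \notin N\Z^r$, hence $\pi(\tau_V) \notin N\cdot(L/V)$. If $v = 0$, then necessarily $t \neq 0$, and taking $N$ to be the exponent of $T$ forces $N \cdot T = \{0\}$, so $N\cdot(L/V) = N\Z^r \oplus \{0\}$, which cannot contain $(0, t)$. Either way $L' = NL$ does the job. I do not expect a genuine obstacle here; the only mild subtlety is that a single $N$ cannot simultaneously handle both the free and torsion parts of $\pi(\tau_V)$, which is exactly what forces the small case-split above. Once one recognises that $\pi(NL) = N\cdot(L/V)$, the statement reduces to the elementary fact that in a finitely generated abelian group no nonzero element is $N$-divisible for every $N$.
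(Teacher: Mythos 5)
Your proof is correct, but it takes a genuinely different route from the paper's. The paper constructs $L'$ by hand as a sublattice \emph{containing} $V$: it splits on whether $\operatorname{span}_\Z\{V,\tau_V\}$ has the same rank as $V$, and in each case completes $V$ (respectively $V$ together with $2\tau_V$) by auxiliary vectors $u_j\in L$ to a full-rank lattice, then checks directly that the coset $V+\tau_V$ is missed. You instead take $L'=NL$ and push everything into the quotient: the equivalence $L'\cap(V+\tau_V)=\emptyset \iff \pi(\tau_V)\notin\pi(L')$ is right, $\pi(NL)=N\cdot(L/V)$ is right, and the structure theorem reduces the lemma to the fact that a nonzero element of a finitely generated abelian group fails to be $N$-divisible for some $N$ (your case split on the free versus torsion part is fine, though one can even pick a single uniform $N$, e.g.\ any multiple of $\exp(T)$ exceeding $\max_j|v_j|$). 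What your version buys: it sidesteps the mildly delicate point in the paper's first case, where one must choose the completion vectors $u_j$ so that $\tau_V\notin L'$ (the paper asserts this ``by our construction'' without spelling out that $L'$ must meet $\operatorname{span}_\R V$ exactly in $V$), and the choice $L'=NL$ is canonical enough that in the application one could take a single $N$ (an lcm) to avoid all finitely many bad cosets at once. What the paper's version buys: the lattice it produces contains $V$, which is a stronger structural conclusion, though nothing in the application uses it. Either way the lemma holds as stated.
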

\begin{proof}
If $\dim(span_\Z\{V, \tau_V\})=\dim(V)$, find $u_{\dim(V)+1},\dots, u_d\in L$, if necessary, such that $$L':=span_\Z\{V, u_{\dim(V)+1},\dots, u_d \}$$ is a lattice. By our construction, $\tau_V\notin L'$. Hence $V+\tau_V\subset L'+\tau_V$, a coset of $L'$ in $L$ which does not intersect $L'$, as desired. 

If $\dim(span_\Z\{V, \tau_V\})>\dim(V)$, find $u_{\dim(V)+2},\dots, u_d\in L$, if necessary, such that $$L':=span_\Z\{V, 2\tau_V, u_{\dim(V)+2},\dots, u_d \}$$ is a lattice. Notice $L'\cap (V+\tau_V)$ is not empty if and only if there exist $v, v'\in V, \alpha\in\Z$ such that
$$v+2\alpha \tau_V = v'+ \tau_V. $$
If it happens, $(2\alpha-1)\tau_V\in V$, which contradicts the assumption that $\dim(span_\Z\{V, \tau_V\})>\dim(V)$.
\end{proof}

\section{Proof of Theorem \ref{refined}}\label{pf-refined}
We first study relations between $e$ and $\tau$ in planar zonotopes, equivalently symmetric convex polygons.
\begin{lem}\label{e=tau}
	Let $P$ be a zonotope as in Figure \ref{fig:polygon}. Then
	$$\tau_j-\tau_{j+1}=e_j+e_{j+1},\ \forall\ j=1,\dots,m-1.$$
	Furthermore, if $m$ is even, then for any $1\leq j\leq m$,
	$e_{j}$ is a linear combination of $\tau_{j'}, j'\neq j$, with coefficients $\pm 1$.
\end{lem}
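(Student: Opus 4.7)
The plan is to obtain an explicit closed form for $\tau_j$ in terms of the $e_i$ and then do direct algebra. Label the vertices of $P$ as
\[
v_j \;:=\; \tfrac12(e_1+\cdots+e_j)-\tfrac12(e_{j+1}+\cdots+e_m),
\]
so that the edge $e_j$ runs from $v_{j-1}$ to $v_j$. By central symmetry the parallel edge $e'_j$ runs from $-v_j$ to $-v_{j-1}$ (oriented in the same direction as $e_j$), so the translation vector is
\[
\tau_j \;=\; -v_{j-1}-v_j \;=\; \sum_{i>j}e_i \;-\; \sum_{i<j}e_i.
\]
Part 1 then drops out immediately by subtraction: $\tau_j - \tau_{j+1} = e_j + e_{j+1}$.

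For Part 2, I claim that when $m$ is even,
\[
e_j \;=\; \sum_{i<j}(-1)^{j-1-i}\tau_i \;+\; \sum_{i>j}(-1)^{i-j}\tau_i
\]
for every $1\le j\le m$, and all coefficients are visibly $\pm 1$. The cleanest verification is to substitute the closed form of each $\tau_i$ into the right-hand side and switch the order of summation; the coefficient of $e_k$ then becomes a difference of two alternating $\pm 1$-sums, and a short case analysis on whether $k<j$, $k=j$, or $k>j$ shows this equals $\delta_{jk}$. The hypothesis that $m$ is even enters precisely to make the two alternating boundary contributions balance.

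Alternatively, a more conceptual derivation uses Part 1 directly. Pairing $(e_1+e_2)+(e_3+e_4)+\cdots = (\tau_1-\tau_2)+(\tau_3-\tau_4)+\cdots$ (which is possible only because $m$ is even) gives $E:=\sum_i e_i = \sum_j(-1)^{j+1}\tau_j$. Combined with $\tau_1 = E-e_1$ this yields $e_1 = -\tau_2+\tau_3-\cdots-\tau_m$ at once with $\pm 1$ coefficients; the case $j=m$ is symmetric via $\tau_m = -(E-e_m)$. For internal $j\in\{2,\ldots,m-1\}$ one uses $e_j = E-\tau_j-2\sum_{i<j}e_i$ (read off from the closed form of $\tau_j$) together with the identity
\[
\sum_{i=1}^k(-1)^{i+1}\tau_i \;=\; \sum_{i\le k}e_i \ \text{if } k \text{ is even}, \qquad \sum_{i=1}^k(-1)^{i+1}\tau_i \;=\; \sum_{i>k}e_i \ \text{if } k \text{ is odd},
\]
both being quick inductions on $k$ using Part 1. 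Substituting and collecting terms causes the factors of $2$ to cancel against $E$, leaving only $\pm 1$ coefficients. The main obstacle in either route is the sign bookkeeping across parities; it is precisely the evenness of $m$ that makes the two halves of the sum (before and after the index $j$) match up without residue and produce coefficients in $\{-1,+1\}$ rather than larger integers.
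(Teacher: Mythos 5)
Your proposal is correct and follows essentially the same route as the paper: both start from the closed form $\tau_j=\sum_{i>j}e_i-\sum_{i<j}e_i$ (the paper states it as $\tau_j=e_{j+1}+\cdots+e_{j+m-1}$ with $e_{i+m}:=-e_i$, you derive it from the vertices), obtain Part 1 by subtracting consecutive identities, and obtain Part 2 by pairing consecutive $e_i$'s into differences $\tau_i-\tau_{i+1}$, which is exactly where evenness of $m$ enters. Your version is somewhat more explicit (a general formula for every $j$ rather than the paper's ``similar argument works''), but the underlying argument is the same.
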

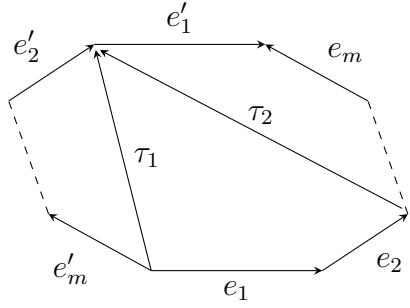
\begin{figure}[H]
\centering
\begin{tikzpicture}[scale=0.75, p2/.style={line width=0.275mm, black}, p3/.style={line width=0.15mm, black!50!white}]

\draw[-stealth] (-1, -2) -- (2, -2);
\draw[-stealth] (2,-2) -- (3.5,-1); 
\draw[-stealth] (2.8, 1) -- (1, 2);
\draw[dashed] (3.5,-1) -- (2.8,1);
\draw[-stealth] (-2, 2) -- (1,2);
\draw[-stealth] (3.4,-0.9)-- (-1.9, 1.9);
\draw[-stealth] (-3.5, 1) -- (-2,2); 
\draw[-stealth]  (-1, -2) -- (-2.8, -1);
\draw[dashed] (-2.8, -1) -- (-3.5,1);

\draw[-stealth] (-1,-2) -- (-1.975,1.9);

\draw (1.9, 1.5) node[anchor=south west]{$e_m$};
\draw (-1.9, -1.5) node[anchor=north east]{$e'_m$};
\draw (2.75, -1.5) node[anchor=north west]{$e_2$};
\draw (-2.75, 1.5) node[anchor=south east]{$e'_2$};
\draw (0.5, -2) node[anchor=north]{$e_1$};
\draw (-0.5, 2) node[anchor=south]{$e'_1$};
\draw (-1.5, 0) node[anchor=west]{$\tau_1$};
\draw (0.5, 0.75) node[anchor=west]{$\tau_2$};


\end{tikzpicture}
\caption{$\tau_1-\tau_2=e_1+e_2$}
\label{fig:difference-tau}
\end{figure}
\begin{proof}
	For convenience, denote $e_{j+m}=-e_j, 1\leq j\leq m$. It follows that
\begin{equation}
	\label{tau-e}
	\begin{aligned}
		\tau_1=&e_2+\cdots+e_{m}\\
		\tau_2=&e_3+\cdots+e_{m+1}\\
		\vdots&\\
		\tau_{m}=&e_{m+1}+\cdots+e_{2m-1}
	\end{aligned}.
\end{equation}
Then the differences between adjacent equalities imply \begin{equation}\label{difference-of-tau}\tau_j-\tau_{j+1}=e_j+e_{j+1},\ j=1,\dots,m-1\  (\text{see Figure \ref{fig:difference-tau}}). 
\end{equation}

If $m$ is even, by \eqref{tau-e}, \eqref{difference-of-tau},
\begin{equation}
\begin{aligned}\tau_1=&\left(e_2+e_3\right)+\cdots+\left(e_{m-2}+e_{m-1}\right) + e_{m}\\=& \left(\tau_2-\tau_3\right)+\cdots+\left(\tau_{m-2}-\tau_{m-1}\right)+e_{m},\end{aligned}\end{equation}
as desired. Similar argument works for any $e_j$, $1\leq j\leq m$.
\end{proof}

We also need a quantitative version of condition 2 in Bolle's theorem (Theorem \ref{Bol94}).
\begin{lem}\label{quantitative-Bol}
	Suppose  $L\subset\R^2$ is a lattice and $e\in L$, $\tau\in\R^2$. If there exists $t\in\R$ such  that $te+\tau\in L$, then $\det(e,\tau)$ is an integer multiple of $\det(L)$. Conversely, if $\det(\tau, e)\in \det(L) \Q$, there exists $t\in\R$ and a lattice $\widetilde{L}$ containing $L$ such that $te+\tau\in \widetilde{L}$.
\end{lem}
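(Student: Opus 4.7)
The plan for the forward direction is short and algebraic. Setting $\ell := te+\tau \in L$, we have $\tau = \ell - te$, so by multilinearity and antisymmetry,
\[
\det(e,\tau) = \det(e,\ell) - t\det(e,e) = \det(e,\ell).
\]
Now $e,\ell\in L$, so writing them in any $\Z$-basis $\{u_1,u_2\}$ of $L$ gives $\det(e,\ell) = (a_1 b_2 - a_2 b_1)\det(u_1,u_2)$ with integer coefficients, hence $\det(e,\tau)\in\det(L)\,\Z$.

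For the converse, the plan is to pick coordinates adapted to $e$ and to the lattice. Let $e'\in L$ be the primitive lattice vector in the direction of $e$, so $e = ke'$ for some positive integer $k$, and extend $\{e'\}$ to a $\Z$-basis $\{e',v\}$ of $L$; then $\det(L) = |\det(e',v)|$. Since $e,v$ span $\R^2$, decompose $\tau = \alpha e + \beta v$ with $\alpha,\beta\in\R$. A direct calculation gives
\[
\det(\tau,e) = \det(\alpha e + \beta v,\, ke') = \beta k\det(v,e') = -\beta k\det(e',v),
\]
so $|\det(\tau,e)| = |\beta|\,k\,\det(L)$. The hypothesis $\det(\tau,e)\in\det(L)\,\Q$ then forces $\beta\in\Q$. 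Write $\beta = p/N$ with $p\in\Z$, $N\in\Z_{>0}$, and set
\[
\widetilde{L} := \mathrm{span}_\Z\bigl\{e',\, v/N\bigr\}.
\]
Then $L\subset\widetilde{L}$ (since $e',\,v = N\cdot(v/N)\in\widetilde{L}$), and choosing $t := -\alpha$ gives $te + \tau = \beta v = p\cdot(v/N)\in\widetilde{L}$, as required.

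I do not anticipate a genuine obstacle here: the lemma is essentially the bookkeeping statement that the index $[\widetilde{L}:L]$ absorbs the denominator of $\beta$, and the only mild subtlety is to handle the case when $e$ is not primitive in $L$, which is why the factor $k$ is introduced via the primitive vector $e'$. The real number $t$ is unconstrained, which is what lets us cancel the component $\alpha e$ of $\tau$ along $e$ without any arithmetic assumption on $\alpha$.
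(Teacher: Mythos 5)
Your proof is correct and follows essentially the same route as the paper's: in both directions you kill the component of $\tau$ along $e$ by the choice of $t$, use the determinant hypothesis to show the transverse component is rational relative to the lattice, and then enlarge $L$ to absorb the denominator (the paper normalizes $L=\Z^2$ and uses $e^\perp$ as the transverse direction, while you use a lattice basis $\{e',v\}$ with $e'$ primitive along $e$, which has the minor advantage of exhibiting $\widetilde{L}$ with an explicit $\Z$-basis). No gaps; the only implicit assumption, shared with the paper, is $e\neq 0$, which is automatic since $e$ is an edge vector.
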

\begin{proof}
	We may assume $L=\Z^2$. If $te+\tau = u\in \Z^2$, then
	$$\det(e, \tau)= \det(e, te+\tau)= \det(e, u)\in\Z.$$
	Conversely, if $\det(\tau, e)\in\Q$, choose $t_0$ such that $(t_0e+\tau)\perp e$. Denote $e=(x_1, x_2)$ and $e^\perp=(-x_2, x_1)$. Say $(t_0e+\tau) = Ce^\perp$. Since $e\in\Z^2$ and $\det(\tau, e)\in \Q$, both $$\det(e, e^\perp),\ \det(e, \tau)=\det(e, t_0e+\tau)=C\det(e, e^\perp)$$ are rational. Hence $C\in\Q$, $t_0e+\tau\in\Q^2$ and $\widetilde{L}=span_\Z\{t_0e+\tau, \Z^2\}$ is the desired lattice.
\end{proof}

Now we can prove Theorem \ref{refined}. The ``if'' part follows from  Theorem \ref{Bol94} and the second half of Lemma \ref{quantitative-Bol}. The reason we do not need $e_{j_0}$ in the definition of $\Lambda_{j_0}$ is, when $m$ is even, $e_{j_0}$ is a linear combination of $\tau_j, j\neq j_0$, with coefficients $\pm 1$ (see Lemma \ref{e=tau}).

Conversely, assume $P$ multi-tiles. By Corollary \ref{SolProb1}, $P$ admits a lattice multi-tiling. Since the statement is invariant under non-degenerate linear transformations, we may assume $P$ is as in Figure \ref{fig:polygon} and $P+\Z^d$ multi-tiles. Denote
$$J=\left\{j\in\{1,\dots,m\}: e_j\notin\Z^d\right\}.  $$

\subsection*{Case 1} $\#(J)\geq 2$.

Say $J=\{j_1,\dots,j_{\#(J)}\}$. Denote $e^J_s=e_{j_s}, s=1,\dots,\#(J)$ and $P_J$ as the zonotope generated by $e^J_s, s=1,\dots,\#(J)$, namely
$$P_J=\left\{\sum_{s=1}^{\#(J)}\alpha_s e_s^J: \alpha_s\in [-\frac{1}{2}, \frac{1}{2}]\right\}.$$
Denote $\tau^J_s$ as the vector that translates $e^J_{s}$ to its parallel edge in $P_J$. 

Since $e_j, 1\leq j\leq m$ have increasing arguments, so do $e^J_s, 1\leq s\leq \#(J)$. Therefore Lemma \ref{e=tau} applies to $P_J$. We first show that $P_J+\Z^d$ is also a multiple tiling. To see this, observe that $P$ can be obtained by ``adding'' integer vectors $e_j, j\notin J$ into $P_J$. More precisely, apply \eqref{tau-e} to both $P$ and $P_J$, one can see that for each $s=1,\dots,\#(J)$, the difference between $\tau_s^J$ (in $P_J$) and $\tau_{j_s}$ (in $P$) is a linear combination of $e_j, j\notin J$ with coefficients $\pm 1$, which implies $\tau_s^J-\tau_{j_s}\in\Z^2$. Since each pair $(e_j, \tau_j), j=1,\dots, m$, satisfies conditions in Theorem \ref{Bol94} with respect to $\Z^2$, it follows that $(e^J_s, \tau^J_s), s=1,\dots,\#(J)$ also satisfy conditions in Theorem \ref{Bol94} with $\Z^2$. Hence $P_J+\Z^2$ is a multiple tiling.

\begin{claim}
	$\#(J)$ must be odd and there exists $\gamma\in\R^2$ such that 
	$$e^J_s\in\Z^d+(-1)^s\gamma, \ s=1,2,\dots, \#(J).$$
\end{claim}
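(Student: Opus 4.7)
\bigskip

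The plan is to apply Bolle's theorem (Theorem \ref{Bol94}) to the multi-tiling $P_J + \Z^2$ that was just established. For each $s = 1, \dots, \#(J)$ we have $e^J_s = e_{j_s} \notin \Z^2$ by the definition of $J$. Therefore condition (2) of Theorem \ref{Bol94} cannot hold for the pair $(e^J_s, \tau^J_s)$, which forces condition (1): $\tau^J_s \in \Z^2$ for every $s$. So all translation vectors of $P_J$ lie in $\Z^2$.

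Next, I would exploit the identity from Lemma \ref{e=tau} applied to $P_J$, namely
\[
\tau^J_s - \tau^J_{s+1} = e^J_s + e^J_{s+1}, \qquad s = 1, \dots, \#(J) - 1.
\]
Combined with $\tau^J_s, \tau^J_{s+1} \in \Z^2$, this gives $e^J_s + e^J_{s+1} \in \Z^2$ for every $s$. A trivial induction then shows $e^J_s \equiv (-1)^{s-1} e^J_1 \pmod{\Z^2}$, so setting $\gamma := -e^J_1$ yields
\[
e^J_s \in \Z^2 + (-1)^s \gamma, \qquad s = 1, \dots, \#(J),
\]
which is the second assertion.

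For the parity statement, I would argue by contradiction: suppose $\#(J)$ is even. Then the second half of Lemma \ref{e=tau}, applied to the zonotope $P_J$, tells us that each $e^J_s$ is a $\Z$-linear combination (with coefficients $\pm 1$) of the vectors $\tau^J_{s'}$, $s' \neq s$. But we have already shown every such $\tau^J_{s'}$ lies in $\Z^2$, so this would force $e^J_s \in \Z^2$, contradicting $e^J_s \notin \Z^2$. Hence $\#(J)$ must be odd.

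There is no real obstacle here; the whole claim reduces almost mechanically to Bolle's theorem plus the structural relations in Lemma \ref{e=tau}. The only point that must be handled with care is reducing to the auxiliary multi-tiling $P_J + \Z^2$, which has already been done in the paragraphs preceding the claim, so that Bolle's dichotomy can be applied directly to the pairs $(e^J_s, \tau^J_s)$ of $P_J$ rather than to the original $P$.
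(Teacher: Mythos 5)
Your proposal is correct and follows essentially the same route as the paper: apply Bolle's theorem to $P_J+\Z^2$ to force $\tau^J_s\in\Z^2$ (since $e^J_s\notin\Z^2$ rules out condition (2)), use $\tau^J_s-\tau^J_{s+1}=e^J_s+e^J_{s+1}$ from Lemma \ref{e=tau} to get the alternating congruence with $\gamma=-e^J_1$, and derive the parity from the second half of Lemma \ref{e=tau}. No gaps; your write-up just spells out the choice of $\gamma$ slightly more explicitly than the paper does.
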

Since $(e^J_s, \tau^J_s), s=1,\dots,\#(J)$ satisfy conditions in Theorem \ref{Bol94} with respect to $\Z^2$ but $e^J_s\notin\Z^2$, it follows that all $\tau^J_s\in \Z^2$. By Lemma \ref{e=tau}, $e^J_s+e^J_{s+1}\in\Z^2$ for any $1\leq s\leq \#(J)-1$, which implies there exists $\gamma\in\R^2$ such that
$$e^J_s\in\Z^d+(-1)^s\gamma, \ s=1,2,\dots, \#(J).$$

It remains to show $\#(J)$ must be odd. If $\#(J)$ is even, the second half of Lemma \ref{e=tau} implies all $e^J_{s}\in\Z^2$, contradiction. This completes the proof of Claim 1.

If $\gamma\in\Q^2$, then $e_j\in \Q^2$ for any $1\leq j\leq m$ and there is nothing to prove. So we may assume $\gamma\notin\Q^2$. Denote $J^c=\{1,\dots,m\}\backslash J$.

\begin{claim}
	$\#(J^c)=0,1$ if $\gamma\notin \Q^2$.
\end{claim}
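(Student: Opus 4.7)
The plan is to use Bolle's theorem at indices in $J^c$ to constrain $\gamma$ modulo $\Z^2$, and to show that having two such constraints forces $\gamma\in\Q^2$, contradicting the standing assumption. Write $\T^2=\R^2/\Z^2$ and $\bar v$ for the image of $v\in\R^2$ in $\T^2$. First observe that $\bar\gamma\neq 0$: if $\gamma\in\Z^2$, Claim~1 would force each $e_{j_s}\in\Z^2$, contradicting $j_s\in J$. Next, since $P+\Z^2$ multi-tiles, Bolle's theorem says each pair $(e_j,\tau_j)$ satisfies $\tau_j\in\Z^2$ (case~1) or $e_j\in\Z^2$ and $\exists t\in\R:\ te_j+\tau_j\in\Z^2$ (case~2). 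For $j\in J$ only case~1 applies (since $e_j\notin\Z^2$), giving $\tau_j\in\Z^2$; for $j\in J^c$ both cases unify as $\tau_j\in\R e_j+\Z^2$.

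Next I would compute $\bar\tau_j$ for $j\in J^c$ in terms of $\bar\gamma$. Iterating $\tau_j-\tau_{j+1}=e_j+e_{j+1}$ from $\tau_1=e_2+\cdots+e_m$ (Lemma~\ref{e=tau}) gives
$$\tau_j=\sum_{k=j+1}^m e_k-\sum_{k=1}^{j-1}e_k.$$
Reducing modulo $\Z^2$ and using $\bar e_k=0$ for $k\in J^c$ together with $\bar e_{j_s}=(-1)^s\bar\gamma$, writing $N:=\#(J)$ and $p:=\#\{s:j_s<j\}$, the fact that $N$ is odd (Claim~1) gives $\sum_{s=1}^N(-1)^s=-1$, so
$$\bar\tau_j=\left(\sum_{s=p+1}^N(-1)^s-\sum_{s=1}^p(-1)^s\right)\bar\gamma=\left(-1-2\sum_{s=1}^p(-1)^s\right)\bar\gamma=\pm\bar\gamma.$$
In particular $\bar\tau_j\neq 0$.

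Combining with Bolle, $\pm\bar\gamma=\bar\tau_j\in\R e_j+\Z^2$ means $\gamma\in\R e_j+\Z^2$ for every $j\in J^c$. If $\#(J^c)\geq 2$, pick distinct $j,k\in J^c$ and write $\gamma=te_j+u=se_k+v$ with $t,s\in\R$ and $u,v\in\Z^2$; then $te_j-se_k=v-u\in\Z^2$. Since $e_j,e_k\in\Z^2$ are linearly independent, inverting the integer matrix with columns $e_j,-e_k$ over $\Q$ forces $t,s\in\Q$, whence $\gamma\in\Q e_j+\Z^2\subset\Q^2$, contradicting $\gamma\notin\Q^2$. The only delicate point is the sign/parity bookkeeping in the modular computation of $\bar\tau_j$; the oddness of $N$ from Claim~1 is precisely what rules out the degenerate possibility $\bar\tau_j=0$, which would leave $\gamma$ unconstrained.
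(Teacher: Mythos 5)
Your proposal is correct and follows essentially the same route as the paper: you use the oddness of $\#(J)$ and the alternating congruences $e_{j_s}\in\Z^2+(-1)^s\gamma$ to show $\tau_j\equiv\pm\gamma\not\equiv 0 \pmod{\Z^2}$ for $j\in J^c$, force condition 2 of Bolle's theorem there, and then use the non-degenerate integer matrix $(e_{j},e_{k})$ to conclude $t,t'\in\Q$ and hence $\gamma\in\Q^2$, a contradiction. Your version merely makes the modular bookkeeping for $\bar\tau_j$ more explicit than the paper does, which is a welcome but not substantively different elaboration.
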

Back to the original polygon $P$. Since $\#(J)$ is odd and 
$$e^J_s\in\Z^d+(-1)^s\gamma, \ s=1,2,\dots, \#(J),$$
 it follows that for any $j \notin J$, $\tau_j\in\Z^2\pm\gamma$ which does not lie in $\Z^2$. Thus condition $2$ in Theorem \ref{Bol94} must hold for $(e_j, \tau_j), \forall j\notin J$, with respect to $\Z^2$. If $\#(J^c)\geq 2$, there are $e_{j_0}, e_{j_0'}\in\Z^2\backslash\{0\}$, not parallel to each other, and $t, t'\in\R$ such that both
$$te_{j_0}+\gamma\in\Z^2,\  t'e_{j_0'}+\gamma\in\Z^2.  $$
See $e_{j_0}, e_{j_0'}$ as column vectors. Then
$$te_{j_0}-t'e_{j_0'}=(e_{j_0}, e_{j_0'})\begin{pmatrix}
	t\\-t'
\end{pmatrix}\in\Z^2.$$

Since $e_{j_0}, e'_{j_0'}\in\Z^2\backslash\{0\}$ are not parallel to each other, the matrix $(e_{j_0}, e_{j_0'})\in\Z_{2\times 2}$ is non-degenerate. Then both $t, t'$ are rational and $\gamma\in\Z^2-te_{j_0}\subset\Q^2$, contradiction. 
\vskip.125in
Now let us finish the case $\#(J)\geq 2$. 

If $\gamma\in\Q^2$, all $e_j$ are rational and conditions in Theorem \ref{refined} are satisfied.

If $\gamma\notin\Q$ and $J^c=\emptyset$, then $m=\#(J)$ must be odd. Also the definition of $J$ and Theorem \ref{Bol94} imply all $\tau_j\in\Z^2$. Hence $\Lambda_\tau$ is a sub-lattice of $\Z^2$, as desired. 

If $\gamma\notin\Q$ and $J^c=\{j_0\}$, then $m=\#(J)+1$ must be even. As we discussed right before Claim 1, $\tau_s^J\in\Z^2$ implies $\tau_{j_s}\in\Z^2$. Therefore $\tau_j\in\Z^2$ for any $j\neq j_0$ and $\Lambda_{j_0}$ is a sub-lattice of $\Z^2$. It remains to show $\det(e_{j_0},\tau_{j_0})\in\Q$. Since $\tau_{j_0}\notin\Z^2$ and $P+\Z^2$ multi-tiles, the pair $(e_{j_0}, \tau_{j_0})$ must satisfy condition $2$ in Theorem \ref{Bol94}, which, by Lemma \ref{quantitative-Bol}, implies $\det(e_{j_0},\tau_{j_0})\in\Z$, as desired.
\subsection*{Case 2}
	$\#(J)=1$.

	Say $J=\{j_0\}$. In this case $\#(J)$ is odd and there exists $\gamma\in\R^2$ such that $e_{j_0}\in\Z^2-\gamma$. If $\gamma\in \Q$, all $e_j$ are rational and there is nothing to prove. If $\gamma\notin\Q^2$, the proof of Claim 2 in Case 1 still works and $P$ turns out to be a parallelogram.

\subsection*{Case 3}
$\#(J)=0$. Trivial.
\vskip.125in
Above all, we proved that if $P$ is not a parallelogram and $P+\Z^2$ multi-tiles, then $P$ must satisfy one of the following.
\begin{enumerate}
	\item $e_j\in\Q^2$, for any $1\leq j\leq m$. ($\#(J)=0, 1$, or $\#(J)\geq 2, \gamma\in\Q^2$)

	\item $m$ is odd and $\tau_j\in\Z^2$ for any $1\leq j\leq m$. ($\#(J)\geq 2, \gamma\notin\Q$, $\#(J^c)=0$)

	\item $m$ is even, there is a unique $j_0\in\{1,\dots,m\}$ such that $\tau_j\in\Z^2$ for any $j\neq j_0$,  $e_{j_0}\in\Z^2$ and $\det(e_{j_0}, \tau_{j_0})$ is a rational multiple of $\det(\Lambda_{j_0})$. ($\#(J)\geq 2, \gamma\notin\Q$, $\#(J^c)=1$)
\end{enumerate}
Any case above satisfies condition $1$ or $2$ in Theorem \ref{refined}. Also one can see $L_P$ is a discrete subgroup of additive group $\Q^2$. If one can show $L_P$ is full-rank, it is not hard to check that $L_P\cap\Z^2$ is also full-rank, which completes the proof.

Now it remains to show $L_P$ is full-rank. When $m$ is odd, $L_P=\Lambda_\tau$ must be full-rank so there is nothing to prove. When $m$ is even, we shall show that if there exists another $j_0'$ such that $\Lambda_{j_0'}$ is also a lattice, then $\tau_j\in\Q^2$ for any $1\leq j\leq m$. It is already proved above that $\Lambda_{j_0}$ is a lattice in $\Q^2$, so it remains to show $\tau_{j_0}\in\Q^2$. Since $P$ is not a parallelogram and $m\geq 4$, $\{\tau_j,\, j\neq j_0, j_0'\}$ generate a sub-lattice of $\Lambda_{j_0}\subset \Q^2$. Since $\Lambda_{j'_0}$ is also a lattice, $\tau_{j_0}$ is rationally dependent with $\{\tau_j,\, j\neq j_0, j_0'\}$, thus must be rational, as desired. In fact in this case $e_j\in\Q^2$ for any $1\leq j\leq m$ (see Lemma \ref{e=tau}).

\section{Proof of Theorem \ref{periodic-multi-tilings}}\label{pf-perodic}
Theorem \ref{periodic-multi-tilings} follows from Theorem \ref{Kol00}, Theorem \ref{main1}, Theorem \ref{refined} and the following lemma.
\begin{lem}\label{sub-lattice}
	Let $L\subset\R^d$ be a lattice and $L_1,\dots,L_n\subset L$ are sub-lattices. Then 
	$$\bigcap_{j=1}^n L_j  $$
	is a sub-lattice of $L$.
\end{lem}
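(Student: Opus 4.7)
\medskip

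\noindent\textbf{Proof proposal for \lemref{sub-lattice}.}

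The plan is to reduce the statement to the classical fact that any finite-index subgroup of a lattice is itself a lattice. By induction on $n$, it suffices to handle the case $n=2$: if $L_1, L_2 \subset L$ are sub-lattices, show that $L_1 \cap L_2$ is a sub-lattice. Indeed, if the $n=2$ case is established, then $\bigcap_{j=1}^{n-1} L_j$ is a sub-lattice of $L$ by induction, and intersecting it with $L_n$ keeps us inside $L$ as a sub-lattice.

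First I would show that every sub-lattice $L_j$ has finite index in $L$. Since $L \cong \Z^d$ and $L_j \cong \Z^d$ as abelian groups, the quotient $L/L_j$ is a finitely generated abelian group whose free part has rank $d-d=0$; hence $L/L_j$ is finite and $[L:L_j] < \infty$. Next, by the standard inequality
\[
[L : L_1 \cap L_2] \;\leqslant\; [L : L_1]\,[L : L_2]
\]
(which follows from the injection $L/(L_1\cap L_2) \hookrightarrow L/L_1 \times L/L_2$), the intersection $L_1 \cap L_2$ also has finite index in $L$.

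Finally I would argue that any finite-index subgroup $H$ of a lattice $L$ is itself a lattice. Discreteness is immediate, since $H \subset L$ and $L$ is discrete in $\R^d$. For the rank: let $k = [L:H]$, so by Lagrange's theorem $kL \subset H$; choosing a $\Z$-basis $u_1,\dots,u_d$ of $L$, the vectors $ku_1,\dots,ku_d$ lie in $H$ and are $\R$-linearly independent, so $H$ has full rank $d$ and is therefore isomorphic to $\Z^d$.

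The argument is essentially routine; there is no genuine obstacle, but one should be careful about the convention (used throughout the paper) that ``lattice'' means \emph{full-rank} discrete subgroup, which is why the finite-index reduction — rather than just discreteness — is the right framing.
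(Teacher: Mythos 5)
Your proof is correct, but it takes a different route from the paper's. The paper argues directly by contradiction on the rank: if $L_1\cap L_2$ were not full-rank, one could pick $u\in L_1$ whose $\Z$-span together with $L_1\cap L_2$ has strictly larger rank; then $\Z u\cap L_2$ must be trivial (otherwise a nonzero multiple of $u$ would land in $L_1\cap L_2$), so adjoining $u$ to $L_2$ would produce a subgroup of $L$ of rank greater than $d$, which is impossible. Your argument instead passes through the group-theoretic notion of index: each $L_j$ has finite index in $L$ because both have rank $d$, the injection $L/(L_1\cap L_2)\hookrightarrow L/L_1\times L/L_2$ bounds $[L:L_1\cap L_2]$, and a finite-index subgroup $H$ of $L$ is full-rank because $kL\subset H$ for $k=[L:H]$. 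Both proofs are complete; yours is slightly longer but buys a quantitative byproduct (the bound $[L:L_1\cap L_2]\leq [L:L_1][L:L_2]$, hence a covolume bound for the intersection), while the paper's dimension-count is shorter and stays entirely at the level of ranks of subgroups of $\R^d$ without invoking quotient groups. Your closing remark about the full-rank convention is apt: that is exactly the content of the lemma, since discreteness of the intersection is trivial.
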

\begin{proof}
	It suffices to prove the case $n=2$. If $L_1 \cap L_2$ is not full-rank, there exists $u\in L_1$ such that $$\dim(span_\Z\{L_1 \cap L_2, u\})>\dim(L_1 \cap L_2).  $$
Since $u\in L_1$, $\Z u \cap L_2$ must be trivial, which implies
$$\dim(span_\Z\{L_2, u\})>\dim(L_2)=d,  $$
contradiction.
\end{proof}

Now we can complete the proof. By Theorem \ref{Kol00}, if $P\subset\R^2$ is not a parallelogram and $P+\Lambda$ multi-tiles, $\Lambda$ must be a finite union of translated lattices, that is,
$$\delta_\Lambda = \sum_{j=1}^n \delta_{L_j+{z}_j}. $$
By Theorem \ref{main1}, for each $j$ there exists a lattice $\widetilde{L_j}$ containing $L_j$ such that $P+\widetilde{L_j}$ multi-tiles. By Theorem \ref{refined}, $\widetilde{L_j}\cap L_P$ is a lattice, where $L_P$ is defined in \eqref{L_P}. Since both $L_j$ and $\widetilde{L_j}\cap L_P$ are sub-lattices of $\widetilde{L_j}$, by Lemma \ref{sub-lattice} $L_j\cap L_P$ is a lattice. Therefore, by Lemma \ref{sub-lattice} again, 
$$ \bigcap_{j=1}^n \left(L_j\cap L_P\right)=\left(\bigcap_{j=1}^n L_j\right)\cap L_P, $$
which is a finite union of sub-lattices of $L_P$, is a lattice. Hence $\bigcap L_j$ is full-rank and $\Lambda$ is a finite union of translations of $\bigcap L_j$.

\section{Examples}\label{example}
\subsection{Symmetric polygons that do not multi-tile by translations}\label{do-not-multi-tile}
We shall show that for any $m\geq 4$, there exist symmetric $(2m)$-gons that do not multi-tile by translations. As far as the author knows, these are the first known symmetric polygons that do not multi-tile by translations. Since there are many symmetric $(2m)$-gons that do multi-tile by translations, this means, unlike tiling (Venkov-McMullen), one can not determine whether a polygon multi-tiles only by its combinatorial type.

Take a zonotope $P$ as in Figure \ref{fig:polygon} such that $e_j, 1\leq j\leq m$ are rationally independent. 

When $m$ is odd, if $P$ multi-tiles, by Theorem \ref{refined} all $\tau_j$ generate a lattice. Since $m-1\geq 3$, by \eqref{difference-of-tau}, $\tau_j-\tau_{j+1}=e_j+e_{j+1}, 1\leq j\leq m-1$, are rationally dependent, namely there exists $q_1,\dots,q_{m-1}\in\Q$, not all $0$, such that
\begin{equation*}
\begin{aligned}
0= &q_1(e_1+e_2)+\cdots+q_{m-1}(e_{m-1}+e_{m})\\=&q_1e_1+(q_1+q_2)e_2+\cdots +(q_{m-2}+q_{m-1})e_{m-1}+q_{m-1}e_m. 
\end{aligned}
\end{equation*}
It follows that $q_1=0, q_1+q_2=0,\dots, q_{m-2}+q_{m-1}=0, q_{m-1}=0$, which implies $q_j=0$ for any $1\leq j\leq m-1$, contradiction.

When $m$ is even, we may assume $j_0$ in Theorem \ref{refined} equals $1$. Since $e_1$ is a linear combination of $\tau_j, j\geq 2$, with coefficients $\pm 1$ (see Lemma \eqref{e=tau}), $e_1, \tau_2,\dots,\tau_m$ generate a lattice. Since $m-1\geq 3$, by \eqref{difference-of-tau}, $e_1$ and $\tau_j-\tau_{j+1}=e_j+e_{j+1}, 2\leq j\leq m-1$ are rationally dependent, namely there exists $q_1,\dots,q_{m-1}\in\Q$, not all $0$, such that
\begin{equation*}
\begin{aligned}
0= &q_1e_1+q_2(e_2+e_3)+\cdots+q_{m-1}(e_{m-1}+e_{m})\\=&q_1e_1+q_2e_2+(q_2+q_3)e_3+\cdots +(q_{m-2}+q_{m-1})e_{m-1}+q_{m-1}e_m. 
\end{aligned}
\end{equation*}
It follows that $q_1=0, q_2=0,q_2+q_3=0\dots, q_{m-2}+q_{m-1}=0, q_{m-1}=0$, which implies $q_j=0$ for any $1\leq j\leq m-1$, contradiction.

In fact, the summary at the end of Section \ref{pf-refined} says, if $P+\Z^2$ multi-tiles, then $e_j\in(\Z^2\pm\gamma)\cup \Q^2$, for some $\gamma\in\R^2$. Therefore we only need a quadruple of rationally independent $e_j$ to deny multiple tilings of $P$. We omit the proof.

\subsection{Some non-periodic multi-tilings}\label{non-periodic}
We shall show that non-parallelogram and convexity are necessary in Theorem \ref{periodic-multi-tilings}. In fact we shall construct multiple tilings where $\Lambda+\alpha\neq\Lambda$ for any $\alpha\in\R^2\backslash\{0\}$.

It is very easy for parallelograms. One can simply take $P=[0,1]^2$,
$$\Lambda_1=\left(\Z\times(\Z\backslash\{0\})\right)\cup \left(\Z+\beta\right)\times\{0\}, \ \ \Lambda_2=\left((\Z\backslash\{0\})\times\Z\right)\cup \{0\}\times \left(\Z+\beta'\right),$$
for $\beta, \beta'\notin\Z$ and take $\Lambda=\Lambda_1\cup\Lambda_2$

For the convexity, one example is the skew tetromino (see Figure \ref{fig:convexity-necessary}), which is a union of $4$ unit squares.

\begin{figure}[H]
\centering
\begin{subfigure}{.5\textwidth}
\centering
\begin{tikzpicture}[scale=0.75, p2/.style={line width=0.275mm, black}, p3/.style={line width=0.15mm, black!50!white}]

\draw (-1,-1) -- (0, -1) -- (0,0) -- (1,0) -- (1,2) -- (0,2) -- (0,1) -- (-1,1) -- (-1,-1) ;
\draw[dashed] (-1,0) -- (0,0) -- (0,1) -- (1,1);
\draw (0, -1) node[anchor=north]{P};

\end{tikzpicture}
\end{subfigure}
\begin{subfigure}{.5\textwidth}
\centering
\begin{tikzpicture}[scale=0.5, p2/.style={line width=0.275mm, black}, p3/.style={line width=0.15mm, black!50!white}]

\draw[thin, pattern=north east lines, shift={(0,1)}] (-1,-1) -- (0, -1) -- (0,0) -- (1,0) -- (1,2) -- (0,2) -- (0,1) -- (-1,1) -- (-1,-1) ;
\draw[thin, pattern=north east lines,shift={(0,-1)}] (-1,-1) -- (0, -1) -- (0,0) -- (1,0) -- (1,2) -- (0,2) -- (0,1) -- (-1,1) -- (-1,-1) ;
\draw[thin, pattern=north east lines,shift={(0,3)}] (-1,-1) -- (0, -1) -- (0,0) -- (1,0) -- (1,2) -- (0,2) -- (0,1) -- (-1,1) -- (-1,-1) ;
\draw[thin, pattern=north east lines,shift={(0,5)}] (-1,-1) -- (0, -1) -- (0,0) -- (1,0) -- (1,2) -- (0,2) -- (0,1) -- (-1,1) -- (-1,-1) ;
\draw[shift={(2,2)}] (-1,-1) -- (0, -1) -- (0,0) -- (1,0) -- (1,2) -- (0,2) -- (0,1) -- (-1,1) -- (-1,-1) ;
\draw[shift={(2,4)}] (-1,-1) -- (0, -1) -- (0,0) -- (1,0) -- (1,2) -- (0,2) -- (0,1) -- (-1,1) -- (-1,-1) ;
\draw[shift={(-2,2)}] (-1,-1) -- (0, -1) -- (0,0) -- (1,0) -- (1,2) -- (0,2) -- (0,1) -- (-1,1) -- (-1,-1) ;
\draw[shift={(-2,4)}] (-1,-1) -- (0, -1) -- (0,0) -- (1,0) -- (1,2) -- (0,2) -- (0,1) -- (-1,1) -- (-1,-1) ;
\draw[shift={(2,0)}] (-1,-1) -- (0, -1) -- (0,0) -- (1,0) -- (1,2) -- (0,2) -- (0,1) -- (-1,1) -- (-1,-1) ;
\draw[shift={(2,-2)}] (-1,-1) -- (0, -1) -- (0,0) -- (1,0) -- (1,2) -- (0,2) -- (0,1) -- (-1,1) -- (-1,-1) ;
\draw[shift={(-2,0)}] (-1,-1) -- (0, -1) -- (0,0) -- (1,0) -- (1,2) -- (0,2) -- (0,1) -- (-1,1) -- (-1,-1) ;
\draw[shift={(-2,-2)}] (-1,-1) -- (0, -1) -- (0,0) -- (1,0) -- (1,2) -- (0,2) -- (0,1) -- (-1,1) -- (-1,-1) ;
\draw[shift={(-4,-2)}] (-1,-1) -- (0, -1) -- (0,0) -- (1,0) -- (1,2) -- (0,2) -- (0,1) -- (-1,1) -- (-1,-1) ;
\draw[shift={(-4,0)}] (-1,-1) -- (0, -1) -- (0,0) -- (1,0) -- (1,2) -- (0,2) -- (0,1) -- (-1,1) -- (-1,-1) ;
\draw[shift={(-4,2)}] (-1,-1) -- (0, -1) -- (0,0) -- (1,0) -- (1,2) -- (0,2) -- (0,1) -- (-1,1) -- (-1,-1) ;
\draw[shift={(-4,4)}] (-1,-1) -- (0, -1) -- (0,0) -- (1,0) -- (1,2) -- (0,2) -- (0,1) -- (-1,1) -- (-1,-1) ;
\draw[shift={(4,-2)}] (-1,-1) -- (0, -1) -- (0,0) -- (1,0) -- (1,2) -- (0,2) -- (0,1) -- (-1,1) -- (-1,-1) ;
\draw[shift={(4,0)}] (-1,-1) -- (0, -1) -- (0,0) -- (1,0) -- (1,2) -- (0,2) -- (0,1) -- (-1,1) -- (-1,-1) ;
\draw[shift={(4,2)}] (-1,-1) -- (0, -1) -- (0,0) -- (1,0) -- (1,2) -- (0,2) -- (0,1) -- (-1,1) -- (-1,-1) ;
\draw[shift={(4,4)}] (-1,-1) -- (0, -1) -- (0,0) -- (1,0) -- (1,2) -- (0,2) -- (0,1) -- (-1,1) -- (-1,-1) ;

\draw (0, -3) node[anchor=north]{$P+\Lambda_1$};

\end{tikzpicture}
\end{subfigure}%
\begin{subfigure}{.5\textwidth}
\centering
\begin{tikzpicture}[scale=0.5, p2/.style={line width=0.275mm, black}, p3/.style={line width=0.15mm, black!50!white}]
\draw[shift={(-1,-1)}] (-1,-1) -- (0, -1) -- (0,0) -- (1,0) -- (1,2) -- (0,2) -- (0,1) -- (-1,1) -- (-1,-1) ;
\draw[thin, pattern=north east lines, shift={(-4,6)}] (-1,-1) -- (0, -1) -- (0,0) -- (1,0) -- (1,2) -- (0,2) -- (0,1) -- (-1,1) -- (-1,-1) ;
\draw[shift={(-1,1)}] (-1,-1) -- (0, -1) -- (0,0) -- (1,0) -- (1,2) -- (0,2) -- (0,1) -- (-1,1) -- (-1,-1) ;
\draw[thin, pattern=north east lines, shift={(0,4)}] (-1,-1) -- (0, -1) -- (0,0) -- (1,0) -- (1,2) -- (0,2) -- (0,1) -- (-1,1) -- (-1,-1) ;
\draw[shift={(1,1)}] (-1,-1) -- (0, -1) -- (0,0) -- (1,0) -- (1,2) -- (0,2) -- (0,1) -- (-1,1) -- (-1,-1) ;
\draw[shift={(1,3)}] (-1,-1) -- (0, -1) -- (0,0) -- (1,0) -- (1,2) -- (0,2) -- (0,1) -- (-1,1) -- (-1,-1) ;
\draw[thin, pattern=north east lines, shift={(-2,2)}] (-1,-1) -- (0, -1) -- (0,0) -- (1,0) -- (1,2) -- (0,2) -- (0,1) -- (-1,1) -- (-1,-1) ;
\draw[thin, pattern=north east lines, shift={(-2,4)}] (-1,-1) -- (0, -1) -- (0,0) -- (1,0) -- (1,2) -- (0,2) -- (0,1) -- (-1,1) -- (-1,-1) ;
\draw[shift={(1,-1)}] (-1,-1) -- (0, -1) -- (0,0) -- (1,0) -- (1,2) -- (0,2) -- (0,1) -- (-1,1) -- (-1,-1) ;
\draw[thin, pattern=north east lines, shift={(-2,6)}] (-1,-1) -- (0, -1) -- (0,0) -- (1,0) -- (1,2) -- (0,2) -- (0,1) -- (-1,1) -- (-1,-1) ;
\draw[shift={(-3,-1)}] (-1,-1) -- (0, -1) -- (0,0) -- (1,0) -- (1,2) -- (0,2) -- (0,1) -- (-1,1) -- (-1,-1) ;
\draw[thin, pattern=north east lines, shift={(0,6)}] (-1,-1) -- (0, -1) -- (0,0) -- (1,0) -- (1,2) -- (0,2) -- (0,1) -- (-1,1) -- (-1,-1) ;
\draw[thin, pattern=north east lines, shift={(2,6)}] (-1,-1) -- (0, -1) -- (0,0) -- (1,0) -- (1,2) -- (0,2) -- (0,1) -- (-1,1) -- (-1,-1) ;
\draw[thin, pattern=north east lines, shift={(-4,0)}] (-1,-1) -- (0, -1) -- (0,0) -- (1,0) -- (1,2) -- (0,2) -- (0,1) -- (-1,1) -- (-1,-1) ;
\draw[thin, pattern=north east lines, shift={(-4,2)}] (-1,-1) -- (0, -1) -- (0,0) -- (1,0) -- (1,2) -- (0,2) -- (0,1) -- (-1,1) -- (-1,-1) ;
\draw[thin, pattern=north east lines, shift={(-4,4)}] (-1,-1) -- (0, -1) -- (0,0) -- (1,0) -- (1,2) -- (0,2) -- (0,1) -- (-1,1) -- (-1,-1) ;
\draw[shift={(3,-1)}] (-1,-1) -- (0, -1) -- (0,0) -- (1,0) -- (1,2) -- (0,2) -- (0,1) -- (-1,1) -- (-1,-1) ;
\draw[shift={(3,1)}] (-1,-1) -- (0, -1) -- (0,0) -- (1,0) -- (1,2) -- (0,2) -- (0,1) -- (-1,1) -- (-1,-1) ;
\draw[shift={(3,3)}] (-1,-1) -- (0, -1) -- (0,0) -- (1,0) -- (1,2) -- (0,2) -- (0,1) -- (-1,1) -- (-1,-1) ;
\draw[shift={(3,5)}] (-1,-1) -- (0, -1) -- (0,0) -- (1,0) -- (1,2) -- (0,2) -- (0,1) -- (-1,1) -- (-1,-1) ;

\draw (0, -2) node[anchor=north]{$P+\Lambda_2$};

\end{tikzpicture}
\end{subfigure}
\caption{The skew tetromino and two tiles}
\label{fig:convexity-necessary}
\end{figure}
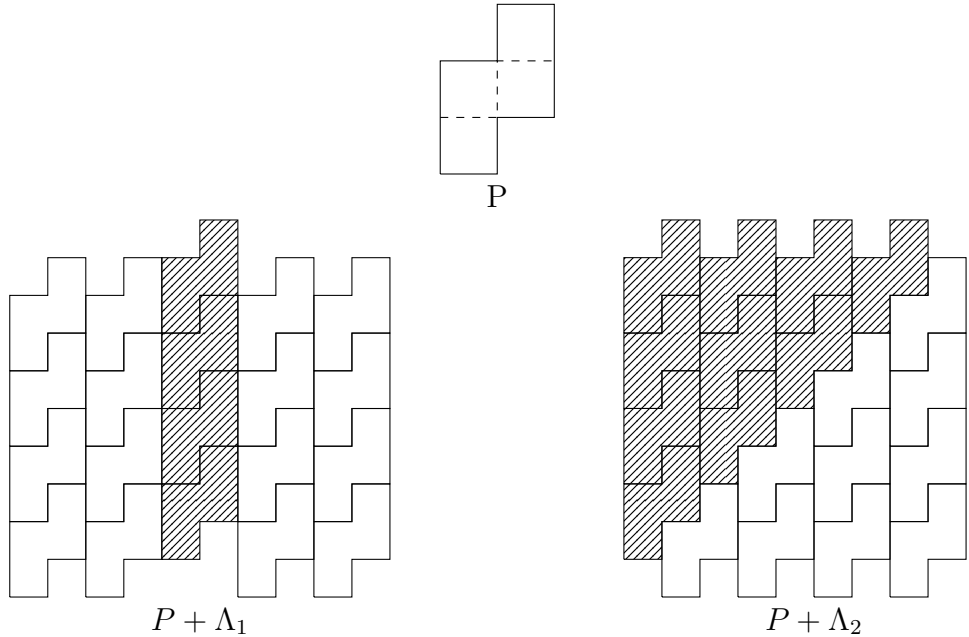
As shown in Figure \ref{fig:convexity-necessary}, both $P+\Lambda_1$, $P+\Lambda_2$ tile, where 
$$\Lambda_1= ((2\Z\backslash\{0\})\times 2\Z)\cup(\{0\}\times(2\Z+1)),  $$
$$\Lambda_2=\{(m,n)\in (2\Z)^2: m\geq n\}\cup \{(m,n)\in (2\Z-1)^2: m<n\}.$$
Then $P+(\Lambda_1\cup\Lambda_2)$ is a multiple tiling where $\Lambda+\alpha\neq\Lambda$ for any $\alpha\in\R^2\backslash\{0\}$.

\subsection{A family of indecomposable multi-tilings by a symmetric non-regular octagon}\label{non-rationality}
We shall construct a family of indecomposable multiple tilings of an octagon, where each discrete set is
 $$\Lambda=\Z\times 2\Z +\{\vec{0}, \alpha\},\ \text{for some}\ \alpha\notin\Q^2.$$
This means, even with trivial counterexamples ruled out (parallelograms, decomposable multi-tilings), the analog of Lagarias-Wang's rationality theorem on $1$-dimensional tiling still fails for multiple tilings in the plane. See the discussion after Theorem \ref{periodic-multi-tilings}.

Let $P$ be the symmetric octagon in Figure \ref{fig:octagon} below whose vertices lie in $\Z^2$.
\begin{figure}[H]
\centering
\begin{tikzpicture}[scale=1, p2/.style={line width=0.275mm, black}, p3/.style={line width=0.15mm, black!50!white}]
\draw[->] (0,0) -- (4, 0);
\draw[->] (0,0) -- (0,4);
\foreach \x/\xtext in {1,2,3}
\draw (\x, 0) node[below]{$\xtext$};
\foreach \y/\ytext in {1,2,3}
\draw (0, \y) node[left]{$\ytext$};
\draw (0,0) node[anchor=north east]{$0$};
\draw[very thin,color=gray] (0,0) grid (3,3);
\draw (1,0) -- (2,0) -- (3,1) -- (3,2) -- (2,3) -- (1,3) -- (0,2) -- (0,1) -- (1,0);
\end{tikzpicture}
\caption{A symmetric octagon whose vertices lie in $\Z^2$}
\label{fig:octagon}
\end{figure}
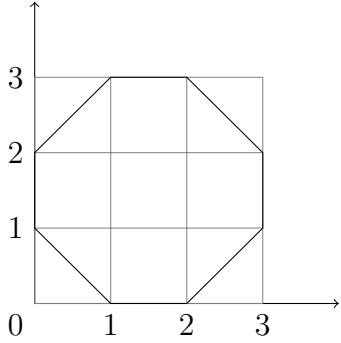
One can check that on each horizontal strip $\R\times[n, n+1]$,
$$P+\Z\times 2\Z=\begin{cases}
	4, & n \text{ is even}\\3, &n \text{ is odd}
\end{cases}.  $$
Therefore for any $\beta\in\R$, 
$$P+\left(\Z\times 2\Z+\{(0,0), (\beta, 1)\}\right)  $$
is a multi-tiling of multiplicity $7$. We claim this multi-tiling is indecomposable. To see this, clearly it does not tile and it is proved by Yang and Zong (\cite{YZ17-lattice}, \cite{YZ17}) that except parallelograms and hexagons, no convex polygon admits translational multi-tilings of multiplicities $2,3,4$. This means a $7$-tiling by an octagon can not be decomposed into two multiple tilings.

\
\begin{appendix}
\section*{Appendix: Convex bodies that multi-tile by translations must be convex polytopes}
\begin{thm*}
Suppose $P\subset\R^d$ is a convex body and there exists a discrete multi-set $\Lambda$ such that $P+\Lambda$ multi-tiles. Then $P$ is a convex polytope. 
\end{thm*}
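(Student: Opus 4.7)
I would argue by contradiction, assuming $P\subset\R^d$ is a convex body (not a polytope) such that $P+\Lambda$ is a multi-tiling of multiplicity $k$, and then show that the multi-tiling forces $\partial P$ to be a finite union of flat $(d-1)$-faces, contradicting the non-polytope assumption. First, a volume comparison inside $\chi_P\ast\delta_\Lambda\equiv k$ gives $\#(\Lambda\cap B_R)=O(R^d)$, so in particular the set $\mathcal{S}:=\Lambda\cap\overline{B(0,2\diam(P))}$ is finite. Second, at almost every $y\in\partial P$ the outer unit normal $\nu_y$ exists uniquely (standard for convex bodies); for such a ``smooth'' $y$, setting $y^\pm:=y\pm\varepsilon\nu_y$ for small $\varepsilon>0$ we have $\chi_P(y^-)=1$ and $\chi_P(y^+)=0$, so the multi-tiling balance at $y^-$ and $y^+$ forces the existence of some $\lambda(y)\in\mathcal{S}\setminus\{0\}$ with $y^+\in P+\lambda(y)$ and $y^-\notin P+\lambda(y)$. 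Equivalently, $y-\lambda(y)\in\partial P$ with outer normal $-\nu_y$. For a generic smooth $y$ (off a measure-zero subset where several translate-boundaries coincide) the choice $\lambda(y)$ is unique.

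Next I would exploit discreteness and convexity. Since $\nu_y$ varies continuously on the smooth locus, the translate $P+\lambda(y)$ whose boundary crosses $\partial P$ transversally at $y$ must change continuously with $y$; valued in the finite set $\mathcal{S}$, the function $\lambda(y)$ is therefore locally constant. Fix a connected component $U$ of the generic smooth locus on which $\lambda(y)\equiv\lambda$, and take $y,y'\in U$. The supporting hyperplane inequality for $P$ at $y$, applied to $y'\in\overline P$, gives $\langle y'-y,\nu_y\rangle\leq 0$. The same inequality for $P$ at $y-\lambda$ (outer normal $-\nu_y$), applied to $y'-\lambda\in\partial P\subset\overline P$, gives $\langle y'-y,\nu_y\rangle\geq 0$. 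Hence $\langle y'-y,\nu_y\rangle=0$, so $y'$ lies in the supporting hyperplane of $P$ at $y$ and $\nu_{y'}=\nu_y$. Consequently $U$ is contained in a single flat $(d-1)$-face $F$ of $P$, paired with the opposite face $F-\lambda$.

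Since smooth points are dense in $\partial P$, it follows that $\partial P$ is (a.e., hence everywhere by closure) a union of flat $(d-1)$-faces. To conclude that $P$ is a polytope I must show this collection is finite. Each face is tagged with a translation $\lambda_F\in\mathcal{S}$; if the collection were infinite, pigeonhole on the finite set $\mathcal{S}$ yields infinitely many faces $\{F_i\}$ sharing one tag $\lambda_*$, and by compactness of $S^{d-1}$ their outer normals $\nu_{F_i}$ accumulate at some $\nu_0$ satisfying the width identity $\langle\lambda_*,\nu_0\rangle=w_P(\nu_0)$. A limit argument, using continuity of the support function $h_P$ on $S^{d-1}$, then produces a boundary point of $P$ that is smooth but contained in no flat $(d-1)$-face, contradicting the previous paragraph. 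The \emph{main obstacle} is exactly this finiteness step: knowing merely that every smooth boundary point lies on a flat face does not imply polytopness (a 2D region bounded above by a piecewise-linear concave function with infinitely many pieces is a counterexample to the implication in isolation), so one must use the discreteness of $\Lambda$ quantitatively, via the constraint $\langle\lambda_*,\nu_{F_i}\rangle=w_P(\nu_{F_i})$ together with the fact that only finitely many elements of $\mathcal{S}$ are available to assign to any putative accumulation facet, to rule out the pathological accumulation.
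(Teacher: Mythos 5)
Your overall strategy (pair almost every smooth boundary point of $P$ with an oppositely oriented boundary point of a nearby translate, deduce flatness from the two supporting-hyperplane inequalities, then count faces using the finiteness of $\mathcal{S}$) is genuinely different from the paper's and could in principle be completed, but as written it has a real gap at its central step. From ``$y^{+}\in P+\lambda(y)$ and $y^{-}\notin P+\lambda(y)$'' you conclude, with the word ``equivalently'', that $y-\lambda(y)\in\partial P$ \emph{with outer normal $-\nu_y$}. This is not an equivalence and is false pointwise: all that follows is that $y\in\partial(P+\lambda(y))$ and that every outer normal $u$ of $P+\lambda(y)$ at $y$ satisfies $\langle u,\nu_y\rangle\le 0$, which is perfectly consistent with the two boundaries crossing transversally at $y$ (two unit disks whose boundaries meet at an angle at $y$: leaving one, you enter the other, yet the normals at $y$ are not opposite). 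Since the entire flatness argument rests on applying the supporting-hyperplane inequality at $y-\lambda$ with normal exactly $-\nu_y$, this step must be repaired. It is repairable --- for each fixed $\lambda$ the set of points of $\partial P\cap\partial(P+\lambda)$ at which the two tangent planes exist but differ has $(d-1)$-measure zero, by a density-point argument for the intersection of two rectifiable convex hypersurfaces, so the opposite-normal conclusion does hold for a.e.\ $y$ --- but that is a substantive lemma, not a restatement, and you give no argument for it.

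On the finiteness step, which you single out as the main obstacle: your own hyperplane computation already disposes of it, so the pigeonhole-and-compactness sketch is both unnecessary and, as written, incomplete. If $y$ and $y'$ are any two generic smooth points with the same tag $\lambda$, the two supporting-hyperplane inequalities give $\langle y'-y,\nu_y\rangle=0$ directly, with no need for connectedness or local constancy of $\lambda(\cdot)$ (which is delicate anyway, since the generic smooth locus need not be locally connected); hence all points with a given tag lie in a single supporting hyperplane, and there are at most $\#(\mathcal{S})$ such hyperplanes. What then remains is only the (true, but also unproved in your sketch) fact that a convex body whose boundary is a.e.\ carried by finitely many supporting hyperplanes is a polytope. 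For comparison, the paper's proof avoids all boundary-regularity and measure-theoretic issues: it decomposes $\R^d$ by $\partial P+\Lambda$ into open cells, shows each cell equals the intersection of exactly $k$ translates of $P$ (hence is convex), writes a fixed cell as an intersection of separating half-spaces of which only finitely many are relevant, and concludes because $P$ is a finite union of such polytopal cells. That route is entirely elementary.
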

\begin{proof}
Notice $\partial P+\Lambda$ decomposes $\R^d$ into disjoint (open) cells. We first show each cell is convex. Pick a cell $C$, for any $\lambda\in\Lambda$, $$C\subset P+\lambda,\  or\  C\cap (P+\lambda)=\emptyset.$$ Say the multiplicity of $P+\Lambda$ is $k$. Then there exists $\lambda_1,\dots, \lambda_k$ such that
$$C\subset\bigcap_{j=1}^k (P+\lambda_j).  $$
We claim they are actually equal. If not, there exists another cell $C'$ such that
$$C'\subset\bigcap_{j=1}^k (P+\lambda_j).$$
Since $C$ and $C'$ are two different cells, they can be separated by $\partial P+\lambda'$ for some $\lambda'\in\Lambda$, that is
$$C\subset(P+\lambda'),\ C'\cap(P+\lambda')=\emptyset,  $$
or
$$C'\subset(P+\lambda'),\ C\cap(P+\lambda')=\emptyset.  $$
In either case, $\lambda'$ is not equal to any of $\lambda_1,\dots,\lambda_k$, which means $C$ or $C'$ is covered at least $k+1$ times. Contradiction.

Next, fix a convex cell $C_0$, for any other convex cell $C$, there exists a half-space $H_C$ such that
$$C_0\subset H_C, \ C\cap H_C=\emptyset,$$
which implies
$$C\subset \bigcap_{C\neq C_0} H_C. $$
Since all cells tile $\R^d$, it follows that up to measure $0$
$$C_0=\bigcap_{C\neq C_0} H_C.$$

Now it suffices to show $C_0$ is in fact an intersection of finitely many half-spaces. Since $diam(C)$ is bounded above uniformly, we may assume $dist (\partial H_C, C_0)$ is large when $dist(C, C_0)$ is large. Choose finitely may $H_C$ whose intersection is bounded. Then when $dist(C, C_0)$ is large, $\partial H_C$ is far from $C_0$ and therefore dropping $H_C$ does not change the intersection, as desired. Thus $C$ must be a polytope. Since the original convex body $P$ is a union of finitely many cells, it must be a convex polytope.
\end{proof}
\end{appendix}
\
\bibliographystyle{abbrv}
\bibliography{/Users/MacPro/Dropbox/Academic/paper/mybibtex.bib}

\begin{thebibliography}{10}

\bibitem{Bha16}
S.~Bhattacharya.
\newblock Periodicity and decidability of tilings of {${\Bbb Z}^2$}.
\newblock {\em arXiv preprint arXiv:1602.05738}, 2016.

\bibitem{Bol94}
U.~Bolle.
\newblock On multiple tiles in {$E^2$}.
\newblock In {\em Intuitive geometry ({S}zeged, 1991)}, volume~63 of {\em
  Colloq. Math. Soc. J\'anos Bolyai}, pages 39--43. North-Holland, Amsterdam,
  1994.

\bibitem{Cha15}
S.~H. Chan.
\newblock Quasi-periodic tiling with multiplicity: a lattice enumeration
  approach.
\newblock {\em Discrete Comput. Geom.}, 54(3):647--662, 2015.

\bibitem{DL19}
A.~Debernardi and N.~Lev.
\newblock Riesz bases of exponentials for convex polytopes with symmetric
  faces.
\newblock {\em https://arxiv.org/abs/1907.04561}, 2019.

\bibitem{ESS02}
J.-H. Evertse, H.~P. Schlickewei, and W.~M. Schmidt.
\newblock Linear equations in variables which lie in a multiplicative group.
\newblock {\em Ann. of Math. (2)}, 155(3):807--836, 2002.

\bibitem{Fur36}
P.~Furtw\"angler.
\newblock \"uber {G}itter konstanter {D}ichte.
\newblock {\em Monatsh. Math. Phys.}, 43(1):281--288, 1936.

\bibitem{BN91}
D.~Girault-Beauquier and M.~Nivat.
\newblock Tiling the plane with one tile.
\newblock In {\em Topology and category theory in computer science ({O}xford,
  1989)}, Oxford Sci. Publ., pages 291--333. Oxford Univ. Press, New York,
  1991.

\bibitem{GKRS13}
N.~Gravin, M.~N. Kolountzakis, S.~Robins, and D.~Shiryaev.
\newblock Structure results for multiple tilings in 3{D}.
\newblock {\em Discrete Comput. Geom.}, 50(4):1033--1050, 2013.

\bibitem{GRS12}
N.~Gravin, S.~Robins, and D.~Shiryaev.
\newblock Translational tilings by a polytope, with multiplicity.
\newblock {\em Combinatorica}, 32(6):629--649, 2012.

\bibitem{GL14}
S.~Grepstad and N.~Lev.
\newblock Multi-tiling and {R}iesz bases.
\newblock {\em Adv. Math.}, 252:1--6, 2014.

\bibitem{GS87}
B.~Gr\"unbaum and G.~C. Shephard.
\newblock {\em Tilings and patterns}.
\newblock W. H. Freeman and Company, New York, 1987.

\bibitem{Ken92}
R.~Kenyon.
\newblock Rigidity of planar tilings.
\newblock {\em Invent. Math.}, 107(3):637--651, 1992.

\bibitem{Kol00}
M.~N. Kolountzakis.
\newblock On the structure of multiple translational tilings by polygonal
  regions.
\newblock {\em Discrete Comput. Geom.}, 23(4):537--553, 2000.

\bibitem{Kol15}
M.~N. Kolountzakis.
\newblock Multiple lattice tiles and {R}iesz bases of exponentials.
\newblock {\em Proc. Amer. Math. Soc.}, 143(2):741--747, 2015.

\bibitem{KL96}
M.~N. Kolountzakis and J.~C. Lagarias.
\newblock Structure of tilings of the line by a function.
\newblock {\em Duke Math. J.}, 82(3):653--678, 1996.

\bibitem{KL16}
M.~N. Kolountzakis and N.~Lev.
\newblock On non-periodic tilings of the real line by a function.
\newblock {\em Int. Math. Res. Not. IMRN}, (15):4588--4601, 2016.

\bibitem{LW96}
J.~C. Lagarias and Y.~Wang.
\newblock Tiling the line with translates of one tile.
\newblock {\em Invent. Math.}, 124(1-3):341--365, 1996.

\bibitem{LM91}
H.~Leptin and D.~M\"uller.
\newblock Uniform partitions of unity on locally compact groups.
\newblock {\em Adv. Math.}, 90(1):1--14, 1991.

\bibitem{LL18}
N.~Lev and B.~Liu.
\newblock Multi-tiling and equidecomposability of polytopes by lattice
  translates.
\newblock {\em Bull. London Math. Soc. (2019), doi:10.1112/blms.12297}.

\bibitem{Mah33}
K.~Mahler.
\newblock Zur {A}pproximation algebraischer {Z}ahlen. {I}.
\newblock {\em Math. Ann.}, 107(1):691--730, 1933.

\bibitem{McM80}
P.~McMullen.
\newblock Convex bodies which tile space by translation.
\newblock {\em Mathematika}, 27(1):113--121, 1980.

\bibitem{Min97}
H.~Minkowski.
\newblock Allgemeine lehrs\"atze \"uber die konvexen polyeder.
\newblock {\em Nachr. Ges. Wiss. G\"ottingen.}, pages 198--219, 1897.

\bibitem{SW71}
E.~M. Stein and G.~Weiss.
\newblock {\em Introduction to {F}ourier analysis on {E}uclidean spaces}.
\newblock Princeton University Press, Princeton, N.J., 1971.
\newblock Princeton Mathematical Series, No. 32.

\bibitem{Ven54}
B.~A. Venkov.
\newblock On a class of {E}uclidean polyhedra.
\newblock {\em Vestnik Leningrad. Univ. Ser. Mat. Fiz. Him.}, 9(2):11--31,
  1954.

\bibitem{Yan18}
Q.~Yang.
\newblock On multiple translative tiling in the plane.
\newblock {\em https://arxiv.org/abs/1809.04272}, 2018.

\bibitem{YZ17-lattice}
Q.~Yang and C.~Zong.
\newblock Multiple lattice tilings in euclidean spaces.
\newblock {\em Canad. Math. Bull. (2018), doi:10.4153/S0008439518000103}.

\bibitem{YZ17}
Q.~Yang and C.~Zong.
\newblock Multiple translative tilings in euclidean spaces.
\newblock {\em arXiv preprint arXiv:1711.02514}, 2017.

\bibitem{Zon06}
C.~Zong.
\newblock {\em The cube: a window to convex and discrete geometry}, volume 168
  of {\em Cambridge Tracts in Mathematics}.
\newblock Cambridge University Press, Cambridge, 2006.

\bibitem{Zon18}
C.~Zong.
\newblock Can you pave the plane nicely with identical tiles.
\newblock {\em arXiv preprint arXiv:1803.06610}, 2018.

\end{thebibliography}

\end{document}